\newtheorem{theo}{Theorem}
\newtheorem{prop}[theo]{Proposition}
\newtheorem{lem}[theo]{Lemma}
\newtheorem{ques}{Question}
\newcommand{\pch}{\chi_{\rho}}
\tikzstyle{vertex}=[circle, draw, inner sep=0pt, minimum size=6pt]
\begin{document}

\title{Packing colorings of subcubic outerplanar graphs}

\author{
Bo\v{s}tjan Bre\v{s}ar $^{a,b}$
\and
Nicolas Gastineau $^{c}$
\and
Olivier Togni $^{c}$
}

\date{\today}

\maketitle

\begin{center}
$^a$ Faculty of Natural Sciences and Mathematics, University of Maribor, Slovenia\\
{\tt bostjan.bresar@um.si}
\medskip

$^b$ Institute of Mathematics, Physics and Mechanics, Ljubljana, Slovenia\\
\medskip

$^c$ Laboratoire LIB, Université de Bourgogne Franche-Comté, France\\
{\tt Nicolas.Gastineau@u-bourgogne.fr} \\
{\tt olivier.togni@u-bourgogne.fr}
\medskip
\end{center}

\begin{abstract}
Given a graph $G$ and a nondecreasing sequence $S=(s_1,\ldots,s_k)$ of positive integers, the mapping $c:V(G)\longrightarrow \{1,\ldots,k\}$ is called an $S$-packing coloring of $G$ if for any two distinct vertices $x$ and $y$ in $c^{-1}(i)$, the distance between $x$ and $y$ is greater than $s_i$.  
The smallest integer $k$ such that there exists a $(1,2,\ldots,k)$-packing coloring of a graph $G$ is called the packing chromatic number of $G$, denoted $\pch(G)$. The question of boundedness of the packing chromatic number in the class of  subcubic (planar) graphs was investigated in several earlier papers; recently it was established that the invariant is unbounded in the class of all subcubic graphs.

In this paper, we prove that the packing chromatic number of any 2-connected bipartite subcubic outerplanar graph is bounded by $7$. Furthermore, we prove that every subcubic triangle-free outerplanar graph has a $(1,2,2,2)$-packing coloring, and that there exists a subcubic outerplanar graph with a triangle that does not admit a $(1,2,2,2)$-packing coloring. In addition, there exists a subcubic triangle-free outerplanar graph that does not admit a $(1,2,2,3)$-packing coloring.
A similar dichotomy is shown for bipartite outerplanar graphs: every such graph admits an $S$-packing coloring for $S=(1,3,\ldots,3)$, where $3$ appears $\Delta$ times ($\Delta$ being the maximum degree of vertices), and this property does not hold if one of the integers $3$ is replaced by $4$ in the sequence $S$. 
\end{abstract}

\noindent
{\bf Keywords:} outerplanar graph; packing chromatic number; cubic graph; coloring;   packing. \\

\noindent
{\bf AMS Subj.\ Class.\ (2010)}: 05C15, 05C12, 05C70
\maketitle

\section{Introduction}
The $S$-packing chromatic number was introduced a decade ago in~\cite{goddard-2008} with  motivation coming from the frequency assignment problem. Roughly the idea of the concept is to generalize the classical coloring by involving the distance between vertices and allowing larger color values only for vertices that are more distant. Nevertheless, the problem has attracted the attention of many discrete mathematicians as it brings appealing combinatorial and computational challenges. 

Given a graph $G$ and a positive integer $d$, a set $A\subseteq V(G)$ is a {\em $d$-packing} in $G$ if for any two distinct vertices $x,y\in A$ the distance between $x$ and $y$ in $G$ is greater than $d$. For a nondecreasing sequence $S=(s_1,\ldots,s_k)$ of positive integers, the mapping $c:V(G)\longrightarrow \{1,\ldots,k\}$ is an {\em $S$-packing coloring} of $G$ if for every $i\in [k]$ the set $c^{-1}(i)$ is an $s_i$-packing. If there exists an $S$-packing coloring of $G$, we say that $G$ is {\em $S$-packing colorable}. If the sequence is $S=[k]$ for some positive integer $k$, we omit $S$ in the definition, and say that $G$ is {\em packing colorable} (as usual, we let $[k]=\{1,\ldots,k\}$). The smallest integer $k$ such that $G$ is packing colorable is {\em the packing chromatic number} of $G$, denoted $\pch(G)$. When we say that {\em the packing coloring condition} holds for a set $A\subseteq V(G)$ we mean that each set $A\cap c^{-1}(i)$, for all $i\ge 1$, is an $i$-packing in $G$. If $A\subset V(G)$, then by $G[A]$ we denote the subgraph of $G$ induced by $A$. 

A number of papers considered packing coloring of different infinite grids and lattices~\cite{bkr-2007, fiklli-2009, finbow-2010, kove-2014, SO2010}, where the most interesting development is about the infinite square grid; we mention only the recent paper on the topic~\cite{barnaby-2017} where it was shown that $13\le\pch(\mathbb{Z}\times\mathbb{Z})\le 15$, which is the latest refinement of the known bounds (initial bounds were presented already in the seminal paper~\cite{goddard-2008}). Fiala and Golovach have shown that the decision version of the packing chromatic number is NP-complete even in the class of trees~\cite{fiala-2010}. Packing coloring of some other classes of graphs, such as distance graphs~\cite{ekstein-2014, shao-2015,togni-2014}, hypercubes~\cite{torres-2015}, subdivision graphs of subcubic graphs~\cite{balogh-2018+,bkrw-2017b,gt-2016}, and some other classes of graphs~\cite{argiroffo-2014, jacobs-2013,lbe-2016} was also studied. 

One of the main questions in this area concerns graphs with bounded maximum degree $\Delta$, in particular, {\em subcubic graphs} (i.e., graphs with $\Delta=3$). For graphs with maximum degree $\Delta$, where $\Delta\ge 4$, the infinite $\Delta$-regular tree serves as an example showing that in this class of graphs the packing chromatic number is unbounded (in fact, Sloper proved this in the context of so-called eccentric colorings, but his result implies the same for the packing coloring~\cite{Slo}). On the other hand, the question whether in subcubic graphs the packing chromatic number is bounded was much more intriguing. It was posed in the seminal paper~\cite{goddard-2008}, and then investigated in several papers~\cite{bkr-2016,bkrw-2017a,gt-2016} using different approaches. Recently, Balogh, Kostochka and Liu~\cite{balogh-2018} have provided a negative answer to the question. Moreover, they proved that for every fixed $k$ and $g\ge 2k+2$, almost every $n$-vertex cubic graph of girth at least $g$ has the packing chromatic number greater than $k$. An explicit infinite family of subcubic graphs with unbounded packing chromatic number was then presented in~\cite{bf-2018}.

As the question was answered in the negative for all graphs with bounded maximum degree $3$, it becomes interesting for some subclasses of subcubic graphs. In particular, in~\cite{fiklli-2009} it was asked, whether there is an upper bound for the packing chromatic number of all planar cubic graphs, and this question was repeated in~\cite{bf-2018}. Very recently, the packing chromatic number of subcubic outerplanar graphs was considered~\cite{ght-2018+}. The upper bounds obtained in the paper involve the number of (internal) faces of the plane embedding of an outerplanar graph; for instance, it is proven that if $G$ is a 2-connected subcubic outerplanar graph with $r$ internal faces, then $\pch(G) \le 17\cdot 6^{3r}-2$. The question of boundedness of the packing chromatic number in subcubic outerplanar graphs thus seems widely open.  In this paper, we prove that, quite surprisingly, only 7 colors suffice if we restrict ourselves to the bipartite 2-connected case.

In the following section we fix the notation. In Section~\ref{sec:proof} we prove the following theorem, our main result. 

\begin{theo}\label{mainth}
Let $G$ be a $2$-connected bipartite subcubic outerplanar graph. Then $\chi_{\rho}(G)\le 7$.
\end{theo}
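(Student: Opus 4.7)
The plan is to induct on the number of internal faces $r$ of $G$, using the weak dual tree structure. Since $G$ is $2$-connected outerplanar, it has an outerplane embedding with outer boundary a Hamiltonian cycle $C$; bipartiteness forces $|V(C)|$ even and makes every internal face an even cycle. The chords of $C$ are pairwise non-crossing, so the weak dual $T$ of $G$ is a tree with $r$ nodes, one per internal face. The base case $r=1$ is an even cycle, for which $\pch(G)\le 4$ follows from the standard coloring of even cycles.

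For the inductive step I would pick a leaf $F$ of $T$: a face bounded by a single chord $e=uv$ of $C$ together with a sub-path $P$ of $C$ from $u$ to $v$ whose interior vertices all have degree $2$ in $G$. Since the chord $uv$ is itself an edge, $u$ and $v$ lie in opposite bipartition classes, so $P$ has odd length (at least $3$) and hence an even number of interior vertices. Deleting $e$ and the interior of $P$ yields a graph $G'$ that is still $2$-connected, bipartite, subcubic and outerplanar, with $r-1$ internal faces. By induction $G'$ admits a packing $7$-coloring $c'$. The remaining task is to extend $c'$ to all of $G$ by coloring the new interior vertices of $P$ without violating any packing constraint anywhere in $G$.

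The main obstacle is exactly this extension step: the new vertices lie within small distance of both $u$ and $v$, so their colors must avoid conflicts with everything already colored in $G'$ up to distance $7$. To make the induction go through, I would strengthen the inductive statement so that $c'$ comes equipped with additional structural guarantees at every chord endpoint---for instance, that color $1$ appears densely on one bipartition class near each chord endpoint, and that certain small colors are forbidden in prescribed positions near $u$ and $v$. With such an enriched hypothesis, the extension reduces to a case analysis on the length $\ell$ of $P$ and on the colors appearing in the vicinity of $\{u,v\}$ in $G'$, each case resolved by an explicit local pattern along $P$ together with, if necessary, a bounded local recoloring to re-establish the invariants. I expect the hardest subcases to be the shortest ears ($\ell=3$), where the new vertices sit simultaneously close to both $u$ and $v$ and therefore force a delicate juggling of the colors $2$ through $7$; longer ears offer room to interleave the pattern $1,2,1,3,1,\ldots$ and should be routine once the short cases are in hand.
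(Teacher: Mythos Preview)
Your framework---processing the weak-dual tree face by face, with the even-cycle base case---is the same scaffold the paper uses. But there is a technical slip and one genuine gap.

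The slip: if you delete both the chord $e$ and the interior of $P$, then $u$ and $v$ each drop to degree~$1$ and $G'$ is no longer $2$-connected. You want to delete only the interior of $P$; the chord $e$ then becomes part of the new outer cycle of $G'$.

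The gap is the extension step, and it is precisely where the content of the theorem lives. You propose to carry the induction through by imposing ``structural guarantees at every chord endpoint'', but local conditions near $u$ and $v$ cannot control the large colours: a vertex coloured $7$ anywhere within distance $7$ of the ear obstructs colour $7$ on several of the new vertices simultaneously, and in a subcubic graph that ball is not determined by what happens at $u$ and $v$ alone. The paper's solution is a \emph{global} invariant maintained across the whole construction: (i) every vertex not coloured $1$ has all its neighbours coloured $1$ (so in effect one bipartition class is entirely coloured $1$, and every face reads $1,x,1,y,\ldots$); (ii) the colours $\{4,5,6,7\}$ are confined to a designated set $B$ of ``big'' vertices, exactly one per face of length $\ge 6$ and at most one per $4$-face, pairwise at distance $\ge 4$; and (iii) any two vertices coloured $6$ or $7$ are at distance $\ge 6$. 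With (i) in force your ``hardest subcase'' $\ell=3$ is in fact trivial---the two interior vertices of a $4$-face receive $1$ and one of $2,3$. The real work, and the bulk of the paper's proof, is a second pass that assigns $4,5,6,7$ to the big vertices, organised as a three-way case split (their $0$-, $1$-, $2$-positions, according to where the parent face's big vertex sits) with explicit periodic patterns depending on the residue of the face length. Identifying this sparse transversal $B$ and its distance constraints is the missing idea; without it the ``bounded local recoloring'' you envisage does not close, because the constraints from colours $4$--$7$ are not local.
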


We continue in Sections~\ref{sec:133} and~\ref{sec:1222} with some results that are related to the coloring of the square of a graph. It was proven by Lih and Wang~\cite{lih-2006} that $\chi(G^2)\le \Delta(G)+2$ for an outerplanar graph $G$ (see also~\cite{agnarsson} for an extension), which confirms Wegner's old conjecture for planar graphs in the case of outerplanar graphs. In the language of $S$-packing colorings the result in~\cite{lih-2006} for outerplanar graphs $G$ with $\Delta(G)\le 3$ implies that $G$ is $(2,2,2,2,2)$-packing colorable. More generally, for an arbitrary $\Delta(G)\ge 3$, the result of Lih and Wang~\cite{lih-2006} gives the $(2,\ldots,2)$-packing colorability of outerplanar graphs $G$, where there are $\Delta(G)+2$ integers $2$. 

In Section~\ref{sec:133}, we present the following result about bipartite outerplanar graphs (i.e., no restriction to 2-connectedness and arbitrary maximum degree).
\begin{theo}
\label{th:133}
Let $G$ be a bipartite outerplanar graph. Let $S=(1,3,\ldots,3)$ be the sequence containing once the integer 1 and $k$ times the integer $3$, $k\ge 3$.
If $\Delta(G)\le k$, then $G$ is $S$-packing colorable.
\end{theo}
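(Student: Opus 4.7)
The plan is to prove Theorem~\ref{th:133} by reducing it to a rainbow hypergraph coloring problem, then arguing by induction on $|V(G)|$.

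\emph{Reduction.} Let $(A, B)$ denote the bipartition of $G$. I assign color $1$ to every vertex of $A$; since $A$ is independent, this is a valid $1$-packing. For the remaining colors $\{2, \ldots, k+1\}$ restricted to $B$, the fact that $d_G(b, b')$ is even for any $b, b' \in B$ (by bipartiteness) implies the $3$-packing condition reduces to: two same-colored vertices of $B$ may not share a common neighbor in $A$. Equivalently, I need a map $c \colon B \to \{2, \ldots, k+1\}$ such that for every $a \in A$, the set $N(a) \subseteq B$ is \emph{rainbow} (pairwise distinctly colored). Since $|N(a)| = \deg(a) \le k$, this is locally feasible.

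\emph{Main induction.} I prove this rainbow-coloring claim by induction on $|V(G)|$. If $G$ has a vertex $v$ of degree $\le 1$, I remove $v$, recurse on $G - v$, and extend. For $v \in A$, the new constraint $N(v)$ has size $\le 1$ (trivially satisfied); for $v \in B$ with unique neighbor $u \in A$, the set $N_{G-v}(u)$ uses $\le k - 1$ distinct colors by the recursion's rainbow property, leaving at least one of the $k$ colors free for $v$. If $\delta(G) \ge 2$, the block-cutpoint tree of $G$ has a leaf block $H$, which must be $2$-connected on $\ge 3$ vertices (a bridge would yield a degree-$1$ vertex); let $c$ denote the cut vertex joining $H$ to the rest of $G$ (none if $G = H$). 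If $H$ has $\ge 2$ inner faces, I pick a leaf face $F$ of the weak dual of $H$ whose internal degree-$2$ vertices exclude $c$ (possible because $c$ is internal to at most one face while the weak dual has $\ge 2$ leaves). Writing the boundary of $F$ as $u, x_1, \ldots, x_{2\ell-2}, v$ (WLOG $u \in A$, $v \in B$, and $uv$ the shared edge), I delete the $x_j$'s, apply induction to $G'$ (which retains $uv$), and extend: each $x_j \in A$ is placed in color class $1$, and the vertices $x_1, x_3, \ldots, x_{2\ell-3} \in B$ are colored greedily along the path --- $x_1$ avoids the $\le k - 1$ distinct colors on $N_{G'}(u)$; each subsequent $x_{2j+1}$ avoids only $c(x_{2j-1})$; and the final vertex also avoids $c(v)$, giving at most $2$ forbidden colors, which still leaves $\ge 1$ option since $k \ge 3$.

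\emph{Main obstacle.} The remaining sub-case is when $H$ is a single even cycle $c, y_1, \ldots, y_{2m-1}$; removing $V(H) \setminus \{c\}$ and recursing reduces the extension to a list coloring of an $m$-cycle (on the odd-indexed $y_i \in B$) in which two adjacent vertices have lists of size $\ge 2$ (the neighbors of $c$, which must avoid the $\le k-2$ colors used on $N_{G'}(c)$) while all other vertices have lists of size $k \ge 3$. When $m$ is odd and the two size-$2$ lists coincide as the same $2$-element set, a naive greedy coloring can fail at the closing edge; resolving this requires a careful case analysis (e.g., picking the initial color and forcing one mid-path color to avoid a specific ``bad'' value) to guarantee a valid list coloring. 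This small list-coloring lemma is the main technical obstacle; once settled, the induction completes.
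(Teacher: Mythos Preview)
Your proposal is correct and shares the key opening move with the paper's proof: both assign color $1$ to one bipartition class $A$, after which the $3$-packing condition on $B$ reduces (via even parity of $B$--$B$ distances) to the rainbow condition that every $N(a)$ be distinctly colored. From there the two arguments diverge in their bookkeeping.

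The paper builds a custom tree $\mathcal{L}_G$ combining the weak dual with bridge-vertices and cut-vertex adjacencies, and inducts on $|\mathcal{L}_G|$; the extension step is handled by an explicit case analysis (leaf in $D$, leaf face, shared via a bridge, shared via a cut vertex, shared via a chord) with direct $1,a_1,1,a_2,\ldots$ patterns. Your route instead uses the standard block--cutpoint tree together with leaf faces of the weak dual of a leaf block, inducts on $|V(G)|$, and recasts each extension as a list-coloring instance on a path or a cycle. Your decomposition is more modular and avoids introducing the auxiliary object $\mathcal{L}_G$; the paper's approach, by contrast, treats cut vertices and faces uniformly and sidesteps any list-coloring lemma.

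Two small remarks. First, the ``main obstacle'' you flag is lighter than you suggest: since the two constrained vertices $y_1,y_{2m-1}$ have the \emph{same} list $L$ (both avoid exactly $c(N_{G'}(c))$) of size $\ge 2$, you can simply assign them two distinct colors from $L$ first, and then greedily color the remaining path $y_3,\ldots,y_{2m-3}$ from one end (each vertex has list size $k\ge 3$ and at most two forbidden colors, the last one included). No parity case analysis is needed. Second, you silently assume $c\in A$ in the single-cycle block case; if $c\in B$ the problem becomes a path rather than a cycle and is strictly easier, so just note this. With these filled in, your argument is complete.
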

\noindent The result is complemented by an example showing that for $S=(1,3,\ldots,3,4)$, where $3$ appears $\Delta(G)-1$ times, there exists a bipartite outerplanar graph that does not admit an $S$-packing coloring. 

In Section~\ref{sec:1222}, subcubic outerplanar graphs are considered (extending the consideration of Theorem 1 to the non-bipartite case), and we prove:
\begin{theo}
\label{th:1222}
If $G$ is a subcubic outerplanar graph with no triangles, then $G$ is $(1,2,2,2)$-packing colorable.
\end{theo}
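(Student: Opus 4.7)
The plan is to proceed by induction on $n = |V(G)|$. Small graphs (short paths, short cycles, etc.) serve as base cases, verified by direct construction, and if $G$ is disconnected it suffices to color each component separately. For the inductive step I would first dispose of the easy reducible configurations before reducing to the main 2-connected case.

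If $G$ contains a vertex $v$ of degree at most $1$, then $G-v$ is again a subcubic triangle-free outerplanar graph and by induction admits a $(1,2,2,2)$-packing coloring; the closed $2$-neighborhood of $v$ in $G$ contains at most four vertices (counting $v$ itself), so at most three of the four colors are blocked at $v$ and one remains available. Cut vertices can be handled via the block-cut structure, exploiting the fact that the subcubic constraint bounds the number of blocks meeting at a cut vertex and the distances that need to be controlled across it. This reduces the problem to the $2$-connected case. If $G$ is merely a cycle $C_n$ with $n\ge 4$, I exhibit an explicit $(1,2,2,2)$-packing coloring based on the period-six pattern $1,2,1,3,1,4$, with small tweaks depending on $n \bmod 6$. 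Otherwise $G$ contains a chord and the weak dual tree of its plane embedding is nontrivial.

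In that case, I pick a leaf face $F = v_0 v_1 \cdots v_{k-1} v_0$ whose only chord shared with the rest of $G$ is $v_0 v_{k-1}$. Triangle-freeness forces $k \ge 4$, and the internal vertices $v_1,\ldots,v_{k-2}$ all have degree $2$ in $G$. Set $G' := G - \{v_1,\ldots,v_{k-2}\}$. Because the chord $v_0 v_{k-1}$ is retained in $G'$, any path in $G$ between two vertices of $V(G')$ that uses the removed path can be replaced by a no-longer walk through the chord, so $G'$-distances equal $G$-distances on $V(G')$; consequently any $(1,2,2,2)$-packing coloring $c'$ of $G'$ obtained by induction automatically remains valid in $G$ on $V(G')$. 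It remains only to extend $c'$ to the removed path $v_1,\ldots,v_{k-2}$. The external constraints on this path come solely from $v_0$ and its third neighbor $u$ (affecting $v_1,v_2$), and symmetrically from $v_{k-1}$ and its third neighbor $w$ (affecting $v_{k-3},v_{k-2}$); interior path-vertices are constrained only by other path-vertices. For $k$ sufficiently large, a shifted periodic pattern such as $1,2,1,3,1,4,\ldots$ can be started and terminated so as to honor both sets of boundary constraints.

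The crux, and what I expect to be the main obstacle, is the small case $k=4$: only $v_1$ and $v_2$ are to be colored, subject to joint constraints given by the four fixed values $c'(v_0), c'(v_3), c'(u), c'(w) \in \{1,2,3,4\}$. A finite case analysis on this quadruple handles most subcases directly, but in the worst scenarios — most notably when these four vertices collectively exhibit all of $\{1,2,3,4\}$ — no direct extension succeeds. To resolve this one must either strengthen the induction hypothesis (for example, by asserting that $G'$ admits a coloring avoiding a specified color pattern at the contact pair $\{v_0,v_{k-1}\}$) or argue that a different leaf face, or some alternative reducible configuration, is always available in a $2$-connected triangle-free subcubic outerplanar graph that is not itself a single short cycle. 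This tension between tight local constraints and rigid inductive colorings is where the real combinatorial work of the proof lies.
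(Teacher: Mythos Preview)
Your overall strategy---induct, peel off a leaf face, extend across the removed path---matches the paper's approach closely. The gap you yourself flag is real and is exactly where your outline stops being a proof: the short-face case (your $k=4$, the paper's $|C(u)|\in\{4,5\}$) genuinely admits boundary data for which no direct extension exists. Neither of your proposed escapes is workable as stated. Strengthening the hypothesis would require you to specify \emph{which} patterns at the contact edge can always be achieved, and you have not done so. Choosing a different leaf face cannot help in general: a ladder of $4$-cycles glued along edges is $2$-connected, subcubic, triangle-free, outerplanar, and has only $4$-cycle leaf faces, so the obstruction recurs at every leaf.

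The paper resolves this with a device you did not consider: \emph{recoloring the boundary}. Before extending into the new face, examine the two shared vertices $w_1,w_2$. If $w_1$ has no neighbor colored $1$, recolor $w_1$ with $1$; then do the same test for $w_2$. This is safe because a vertex all of whose neighbors carry $2$-packing colors can always take color $1$. After this step one is guaranteed that either some $w_i$ is colored $1$, or each $w_i$ has its third neighbor (the one outside the new face) colored $1$. In the first situation the free $a_j$ at $w_i$'s neighborhood gives an immediate extension; in the second, the colors of the outside neighbors are pinned to $1$, so the worst case collapses to $\{c(w_1),c(w_2)\}=\{a_1,a_2\}$ with both outer neighbors colored $1$, and then $(1,a_3)$ on the two new vertices works for $|C(u)|=4$ (with an analogous pattern for $|C(u)|=5$). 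This local recoloring is the missing idea that closes the argument without any global strengthening of the induction hypothesis.
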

\noindent The result is complemented by an example showing that there exists a subcubic outerplanar graph (with triangles), which is not $(1,2,2,2)$-packing colorable. In addition, there exists a subcubic triangle-free outerplanar graph that does not admit a $(1,2,2,3)$-packing coloring.

In the final section, we present a variation of Theorem~\ref{th:1222} concerning the $(1,1,2)$-packing colorability of subcubic triangle-free outerplanar graphs, and pose some open problems.

\section{Notation}
\label{sec:notation}

A path between vertices $a$ and $b$ in a graph $G$ will be called an {\em $a,b$-path}. The length of a shortest $a,b$-path is the {\em distance} $d_G(a,b)$ between $a$ and $b$ in $G$ (we also write $d(a,b)$ if the graph is understood from the context). An $i$-packing in $G$ is a set of vertices $A$ such that for any two distinct vertices $x,y\in A$ we have $d_G(x,y)>i$. Clearly, a $1$-packing coincides with an independent set.

An outerplanar graph is a graph that has a planar drawing in which all vertices belong to the outer face of the drawing. Each time an outerplanar graph is considered, a drawing in which all vertices belong to the outer face of the drawing will be fixed. 
Let $G$ be an outerplanar graph. The {\em outer cycle} of $G$ corresponds to the cycle induced by the edges of the outer face.

For an outerplanar graph $G$, we denote by $\mathcal{T}_G$ the {\em weak dual} of $G$, i.e., the graph whose vertex set is the set of all inner faces of $G$, and $E(\mathcal{T}_G)=\{\alpha \beta |\ \alpha \text{ and  } \beta \text{ share a common edge} \}$.
For $\alpha\in V(\mathcal{T}_G)$, we denote by $C(\alpha)$, the (chordless) cycle in $G$ that corresponds to the face $\alpha$.  As any vertex $\alpha \in T_G$ corresponds to an inner face of $G$, we let $\alpha$ also denote this face and write $V(\alpha)$ for the set of its vertices.

Let $G$ be a 2-connected outerplanar graph (we consider such graphs in Section~\ref{sec:proof}). Note that in this case $\mathcal{T}_G$ is a tree. 
We consider $\mathcal{T}_G$ as a rooted tree with an arbitrary chosen vertex $\omega_0$ as the root. The notions of  {\em parent, child} and {\em descendant} should be clear in this context.  We can also define the {\em depth} of a vertex $\beta \in V (\mathcal{T}_G)$, denoted by $p(\beta)$, as $d_{\mathcal{T}_G}(\beta,\omega_0)$. The depth of $\mathcal{T}_G$, denoted by $p(\mathcal{T}_G)$, is the maximum value of $p(\beta)$, for $\beta\in V(\mathcal{T}_G)$.

\section{Packing coloring of $2$-connected bipartite subcubic outerplanar graphs}
\label{sec:proof}

In this section we prove that a $2$-connected bipartite subcubic outerplanar graph $G$ has packing chromatic number bounded by $7$, i.e., $\chi_{\rho}(G)\le 7$. At the end of the section we add a result (Proposition 5) which shows that this is best possible.

The proof of the main theorem has two steps. In the first step we construct a subset $B$ of $V(G)$, and present a coloring $f$ of the vertices of $A=V(G)\setminus B$ by using only the colors from $\{1,2,3\}$ such that the packing coloring condition holds for $A$. The set $B$ will be called the set of {\em big vertices}, and so any vertex in $B$ is called a {\em big vertex}. The big vertices will be colored in the second step by using only the colors from $\{4,5,6,7\}$.  That is, we will extend $f$ from $A$ to all vertices of $V(G)$. We will prove that $f:V(G)\longrightarrow \{1,\ldots,7\}$ has four special properties, which will be helpful in proving that $f$ is a packing coloring of $G$ (that is, for any two distinct vertices $u,v\in V(G)$, $f(u)=f(v)$ implies $d_G(u,v)>f(u)$). As in many cases, additional (technical) conditions, which are not needed in the result, are helpful in the proof.  

\medskip

\noindent {\bf Proof of Theorem~\ref{mainth}.}

Let $G$ be a $2$-connected bipartite subcubic outerplanar graph. Let $\omega_0$ be a vertex of $\mathcal{T}_G$ of minimum eccentricity; we consider $\mathcal{T}_G$ to be a rooted tree with $\omega_0$ as its root. 

In the proof we will construct a set of vertices $B$ and a packing coloring $f$ of $G$.
The coloring $f$ will satisfy the following additional properties.
\begin{enumerate}[(i)]
\item Any vertex with color different from $1$ (including big vertices whose color will be determined in Step 2) has all its neighbors colored by color 1.
\item Any face $\alpha$ of $G$ contains exactly one big vertex if $|\alpha|\ge 6$ and at most one big vertex if $|\alpha|=4$.
\item Any big vertex is at distance at least $4$ from any other big vertex.
\item Any vertex with color from $\{6,7\}$ is at distance at least $6$ from any vertex with color from $\{6,7\}$.

\end{enumerate}

{\bf Step 1.} In this first step of the proof, we construct the set $B$ and consequently the set $A=V(G)\setminus B$, and color the vertices of $A$ by using only the colors from $\{1,2,3\}$.
During this step, we will ensure that properties (i), (ii) and (iii) are satisfied (while property (iv) will be verified in the second step, when we assign colors to the vertices from $B$).

The proof uses the structure of the tree $\mathcal{T}_G$. We consider the faces in a Breadth-first search (BFS) order by starting with the face $\omega_0$. In each facial cycle we will repeatedly use the pattern $1,2,1,3$; by this we mean that vertices along the cycle will follow in the order $1,2,1,3,\ldots$ or $1,3,1,2,\ldots$, which also applies when the length of the cycle is not divisible by $4$ (in which case, we omit the appropriate number of colors at the end of the sequence). 
If the length of the corresponding cycle is greater than $4$, one of the vertices is taken as a big vertex and is not yet colored (it will not belong to the set $A$),  and for the rest of the cycle we use the pattern $1,2,1,3$ (by repeating it an appropriate number of times). In a 4-cycle we may either use only the colors from $\{1,2,3\}$ or declare one vertex as big,  which depends on the type of the used coloring (described soon).

Clearly, all the vertices of $\omega_0$ can be colored by using the above pattern. 
In particular, if $p(\mathcal{T}_G)=0$, then $G$ is the cycle $C(\omega_0)$, so that we can set $A=V(\omega_0)$, and the described coloring satisfies properties (i), (ii), (iii) and (iv).

Suppose now $p(\mathcal{T}_G)>0$. By following a BFS order of $\mathcal{T}_G$, consider a face $\alpha\in \mathcal{T}_G$, $\alpha\neq \omega_0$, where $\omega \in \mathcal{T}_G$ is the parent of $\alpha$. By the construction, a big vertex $u$ of $\omega$ is already determined (including the possibility that $\omega$ has no big vertices, which may happen when the size of $C(\omega)$ is $4$), and other vertices of $\omega$ are colored by colors $\{1,2,3\}$ repeatedly using the pattern $1,2,1,3$.

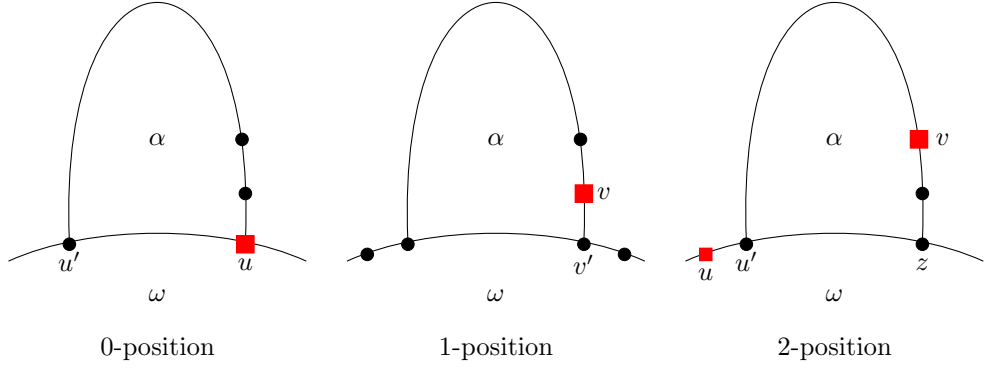
\begin{figure}[t]

\begin{center}

\begin{tikzpicture}[scale=0.9]
\draw  (-2.2,-0.2) .. controls (-1,0.35) and (1,0.35) .. (2.2,-0.2);
\draw  (-1.3,0.1) .. controls (-1.5,4.8) and (1.5,4.8) .. (1.3,0.1);
\node at (-1.3,0.05) [circle,draw=black,fill=black,scale=0.5]{};
\node at (1.3,0.05) [regular polygon, regular polygon sides=4,draw=red,fill=red,scale=0.7]{};
\node at (1.3,0.8) [circle,draw=black,fill=black,scale=0.5]{};
\node at (1.25,1.6) [circle,draw=black,fill=black,scale=0.5]{};
\node at (-1.3,-0.2) {$u'$};
\node at (1.3,-0.25) {$u$};
\node at (0,-0.7) {$\omega$};
\node at (0,-1.5){0-position};
\node at (0,1.6) {$\alpha$};

\draw  (-2.2+5,-0.2) .. controls (-1+5,0.35) and (1+5,0.35) .. (2.2+5,-0.2);
\draw  (-1.3+5,0.1) .. controls (-1.5+5,4.8) and (1.5+5,4.8) .. (1.3+5,0.1);
\node at (-1.3+5,0.05) [circle,draw=black,fill=black,scale=0.5]{};
\node at (1.3+5,0.05) [circle,draw=black,fill=black,scale=0.5]{};
\node at (1.25+5,1.6) [circle,draw=black,fill=black,scale=0.5]{};
\node at (1.3+5,0.8) [regular polygon, regular polygon sides=4,draw=red,fill=red,scale=0.7]{};
\node at (0+5,-0.7) {$\omega$};
\node at (0+5,-1.5){1-position};
\node at (0+5,1.6) {$\alpha$};
\node at (1.3+5,-0.25) {$v'$};
\node at (1.6+5,0.8) {$v$};
\node at (-1.9+5,-0.1) [circle,draw=black,fill=black,scale=0.5]{};
\node at (1.9+5,-0.1) [circle,draw=black,fill=black,scale=0.5]{};

\draw  (-2.2+10,-0.2) .. controls (-1+10,0.35) and (1+10,0.35) .. (2.2+10,-0.2);
\draw  (-1.3+10,0.1) .. controls (-1.5+10,4.8) and (1.5+10,4.8) .. (1.3+10,0.1);
\node at (-1.3+10,0.05) [circle,draw=black,fill=black,scale=0.5]{};
\node at (1.3+10,0.05) [circle,draw=black,fill=black,scale=0.5]{};
\node at (1.3+10,0.8) [circle,draw=black,fill=black,scale=0.5]{};
\node at (1.25+10,1.6) [regular polygon, regular polygon sides=4,draw=red,fill=red,scale=0.7]{};
\node at (-1.9+10,-0.1) [regular polygon, regular polygon sides=4,draw=red,fill=red,scale=0.5]{};
\node at (10,-0.7) {$\omega$};
\node at (10,-1.5){2-position};
\node at (10,1.6) {$\alpha$};
\node at (-1.3+10,-0.2) {$u'$};
\node at (-1.9+10,-0.4) {$u$};
\node at (1.3+10,-0.25) {$z$};
\node at (1.6+10,1.6) {$v$};

\end{tikzpicture}
\end{center}
\caption{0- 1- and 2-position in face $\alpha$ (circle: vertex of $A$; square: vertex of $B$, i.e. big vertex).}
\label{fig012}
\end{figure}

We consider the following three cases, depending on the position of the big vertex in $\alpha$ (and the coloring of the vertices of $A$), respectively called $0$-, $1$- and $2$-position, and extend the function $f$ to all vertices of $\alpha\cap A$ in each of these cases, referred to as a $0$-, $1$- and $2$-coloring, respectively.

\begin{itemize}

\item {\bf 0-position}: suppose that the big vertex $u$ of $\omega$ coincides with one of the two vertices belonging to $\alpha\cap \omega$. In this case, the big vertex of $\alpha$ is also determined, notably, $u\in B\cap \alpha$. Let $u'$ be the other vertex belonging to $\alpha\cap \omega$. (See Figure~\ref{fig012}.) Thanks to property (i), $u'$ is already colored by 1. The corresponding {\bf $0$-coloring} of the vertices of $C(\alpha)$ is obtained by starting with the uncolored neighbor of $u$, and repeatedly using the pattern $1,2,1,3$ along the cycle (or, as mentioned before when describing the setting with the pattern $1,2,1,3$, the actual pattern may also be $1,3,1,2$ depending on the color of the neighbor of $u'$ belonging to $\omega\setminus\alpha$), ensuring that the neighbors of $u'$ get different colors; note that their colors will be $k$, $2$ and $3$, respectively, $k$ being the color that $u$ will get in step 2.

\item {\bf 1-position}: suppose that the big vertex of $\omega$ is not one of the vertices belonging to $\alpha\cap \omega$ and, in addition, not adjacent to any of the two vertices belonging to $\alpha\cap \omega$. (In other words, the four vertices of $C(\omega)$ that are closest to $C(\alpha)$ are colored by colors $1,2,1$ and $3$.) Let $v'$ be the vertex belonging to $\alpha\cap \omega$, for which $f(v')=1$. And thus other big vertices in $\omega$ are at distance at least three from $v'$ by (i). In this case, we define the big vertex $v$ of $\alpha$ (i.e. the vertex $v$ of $\alpha$ belonging to $B$) as the neighbor of $v'$, which does not lie in $\omega$. (See Figure~\ref{fig012}.) The corresponding {\bf $1$-coloring} of the vertices of $C(\alpha)$ is obtained by starting with the neighbor of $v'$ belonging to $\alpha\cap \omega$, and repeatedly using the pattern $1,2,1,3$ along the cycle.

\item {\bf 2-position}: suppose that the big vertex $u$ of $\omega$ is not one of the vertices belonging to $\alpha\cap \omega$, but is adjacent to one of the two vertices belonging to $\alpha\cap \omega$. Let us denote by $u'$ the neighbor of $u$ belonging to $\alpha\cap \omega$, and let the other vertex belonging to $\alpha\cap \omega$ be called $z$. By (i), $f(u')=1$, and $f(z)\in \{2,3\}$. 
Now, if $C(\alpha)$ has 4 vertices, then we use only colors $1,2$ or $1,3$ ensuring that the neighbors of $u'$ in $C(\alpha)$ get distinct colors. Otherwise, we define the big vertex $v$ of $\alpha$ (i.e. the vertex $v$ of $\alpha$ belonging to $B$) to be the vertex at distance $2$ from the vertex $z$, different from a neighbor of $u'$. (See Figure~\ref{fig012}.)  The corresponding {\bf $2$-coloring} of the vertices of $C(\alpha)$ is obtained by starting with $u'$, and repeatedly using the pattern $1,2,1,3$  along the cycle, ensuring that the neighbors of $u'$ get different colors in $\{2,3\}$. 
\end{itemize}

Clearly, for any of the three colorings ($0$-coloring, $1$-coloring, $2$-coloring) the packing coloring condition holds for the vertices of $A$ belonging to a face $\alpha$ following $\omega$ in the BFS order of $\mathcal{T}_G$, and properties (i), (ii) and (iii) extend from vertices colored so far to the vertices of $\alpha$.  We derive the following observation.

\begin{lem}
Let $B$ be the set constructed in Step 1 with $A=V(G)\setminus B$, and let $f$ be the coloring of the vertices of $A$ by colors $\{1,2,3\}$ as described above. Then $f$ and $B$ satisfy properties (i), (ii) and (iii), and for any two distinct vertices $x$ and $y$ in $A$ such that $f(x)=f(y)$ we have $d_G(x,y)>f(x)$. 
\end{lem}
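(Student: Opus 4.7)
The plan is to prove the lemma by induction on the BFS depth of faces in $\mathcal{T}_G$, following exactly the order in which Step 1 constructs the coloring. The induction hypothesis at stage $k$ asserts that properties (i)--(iii) hold for the union $A_k$ of vertices colored so far, and that the packing condition holds inside $A_k$. The base case concerns the root face $\omega_0$: since $C(\omega_0)$ is colored by the pattern $1,2,1,3$ with (at most) one designated big vertex, properties (i)--(iii) are immediate, and within a single chordless cycle two vertices sharing color $2$ are at cyclic distance at least $4$ (hence graph distance $> 2$), while two vertices sharing color $3$ are at cyclic distance at least $4$ and the extra big vertex between them forces graph distance $\ge 4 > 3$.

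For the inductive step I would fix a face $\alpha$ whose parent $\omega$ has been processed, and split into the three cases (0-, 1-, 2-position) as defined. In each case I would first check the ``structural'' properties (i)--(iii) locally: by construction the new big vertex $v$ of $\alpha$ (when one exists) is placed so that its neighbors in $\alpha$ receive color $1$, which preserves (i); the placement rule (0-position reuses the big vertex of $\omega$, 1- and 2-position place $v$ respectively at distance $2$ or $3$ from the big vertex of $\omega$ along the boundary) together with the inductive (iii) in $\omega$ guarantees (iii) also for $v$; and (ii) is manifest from the case split since faces of length $\ge 6$ get one big vertex and $4$-faces in the 2-position case (and only there) receive none.

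The substantive part is the packing condition between $A_k$ and the newly colored vertices of $\alpha$. I would handle the two nontrivial colors separately. For color $2$, I would verify that within the cycle $C(\alpha)$, consecutive color-$2$ vertices are at cyclic distance exactly $4$, and that on the shared edge the ``start'' of the pattern (chosen in each of the three cases to match the existing colors of $\alpha\cap\omega$) forces the next color-$2$ vertex of $\omega$ and the nearest color-$2$ vertex of $\alpha$ to lie on opposite sides of the shared edge, yielding graph distance at least $3$. For color $3$, the argument is analogous but requires one extra step: the pattern spacing gives cyclic distance $4$ inside $\alpha$, and the careful alignment in each of the 0/1/2-position cases (together with the fact that by property (ii) chords across faces do not exist in a chordless-cycle face) implies any shortest path between a color-$3$ vertex of $\alpha$ and a color-$3$ vertex of an already-processed face must cross at least one shared edge and traverse at least four edges in total.

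The main obstacle, as usual, is that distance arguments in $G$ cannot be read off from $\mathcal{T}_G$ alone, because two vertices in faces that are far apart in $\mathcal{T}_G$ might share a vertex (the cut-points of the outerplanar embedding), potentially yielding short paths. To control this I would use the fact that $G$ is $2$-connected and subcubic, so each degree-$3$ vertex lies on exactly two internal faces and each internal face is a chordless cycle; hence any $x,y$-path of length at most $3$ between vertices of distinct faces $\alpha\ne\omega$ must be confined to $V(\alpha)\cup V(\omega)$ and pass through the shared edge. This reduces every cross-face distance check to a finite, local verification inside $\alpha\cup\omega$, which is exactly what the 0/1/2-position definitions were engineered to handle. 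Once this is done, combining the inductive hypothesis with the local verification completes the induction and the lemma.
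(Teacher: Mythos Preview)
Your inductive scheme along the BFS order of $\mathcal{T}_G$ is exactly the structure the paper uses; the paper in fact states the lemma as a one-sentence observation immediately following the Step~1 construction, with no further argument. So the overall approach is the right one.

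There is, however, a genuine gap in your locality reduction. It is \emph{not} true that every $x,y$-path of length at most $3$, with $x$ newly coloured in $\alpha$ and $y$ previously coloured, is confined to $V(\alpha)\cup V(\omega)$: such a path can run from $\alpha$ through the shared edge into $\omega$ and then out into a sibling face $\alpha'$ that was processed before $\alpha$ in the BFS. Hence the proposed ``finite local verification inside $\alpha\cup\omega$'' does not cover all the cases you need for the colour-$3$ packing condition. Two smaller slips in the same spirit: in the $1$- and $2$-positions the new big vertex sits at distance at least $4$ from the big vertex of $\omega$, not $2$ or $3$ (your stated numbers would contradict the very property (iii) you are establishing); and property (ii) concerns the number of big vertices per face, not chordlessness of facial cycles.

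The cleanest repair, implicit in the construction though never spelled out in the paper, bypasses cross-face path analysis altogether. Observe that the pattern $1,2,1,3$ (with one big vertex on each long face) places colour $1$ precisely on one bipartition class of the bipartite graph $G$; consequently any two vertices with colour in $\{2,3\}$ or in $B$ lie in the other class and are at \emph{even} distance. Moreover, the clauses ``ensure that the neighbours of $u'$ (resp.\ $v'$) get different colours'' in the $0$-, $1$- and $2$-colourings yield the invariant that every colour-$1$ vertex has at most one neighbour of each type among $\{2,3,\text{big}\}$; hence no two such vertices can be at distance $2$. Even distance together with distance $\neq 2$ gives distance $\ge 4$, which simultaneously delivers the packing condition for colours $2$ and $3$ and property (iii), without any case analysis over sibling or ancestor faces.
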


{\bf Step 2.} In this step we need to determine the $f$-values of big vertices, and prove that property (iv) holds for these vertices and that the packing coloring condition holds for the set $B$. We determine the colors of big vertices following a BFS order on $\mathcal{T}_G$; we start by determining the possible color of the big vertex of $\omega_0$. If such a vertex exists (i.e., $|V(\omega_0)|>4$), then we color it by $4$.

A big vertex $x$ that belongs to a face $\beta$ will be called a big vertex {\em arising} from $\alpha$ if the following two conditions are true:
\begin{enumerate}
\item[i)] $\beta$ is a descendant of $\alpha$ with respect to $\mathcal{T}_G$;
\item[ii)] $x$ is at distance at most two from $C(\alpha)$.
\end{enumerate}
A step of the coloring construction consists in dealing with a face $\alpha$ that comes next in the chosen BFS order (beginning with the face $\omega_0$), and coloring all the big vertices arising from $\alpha$.
Let $\omega$ be the parent of the face $\alpha$ (if it exists).
In a step of the coloring construction, it is supposed that, if $\alpha\neq \omega_0$, then both the big vertex of $\omega$ and the big vertices arising from $\omega$ are already colored.
Consequently, the big vertex of $\alpha$ is already colored, because it arises from $\omega$.
On the other hand, when $\alpha=\omega_0$, only the big vertex of $\omega_0$ is already colored.

We assume that $f$ satisfies the packing coloring condition for big vertices colored in previous steps, and that property (iv) is satisfied for the big vertices colored in previous steps. Consequently, we have to prove that the packing coloring condition holds for big vertices and that property (iv) remains true when we extend the function $f$ in a new step.
We distinguish three cases with respect to the type of the position of the big vertex ($0$-position, $1$-position, $2$-position) and the corresponding coloring, used to color the big vertex of $\alpha$. We will call the colors in $\{6,7\}$  {\em very big}.

\bigskip

\textbf{Case 1.} $\alpha$ is in 0-position, see Figure~\ref{fig:0pos}.

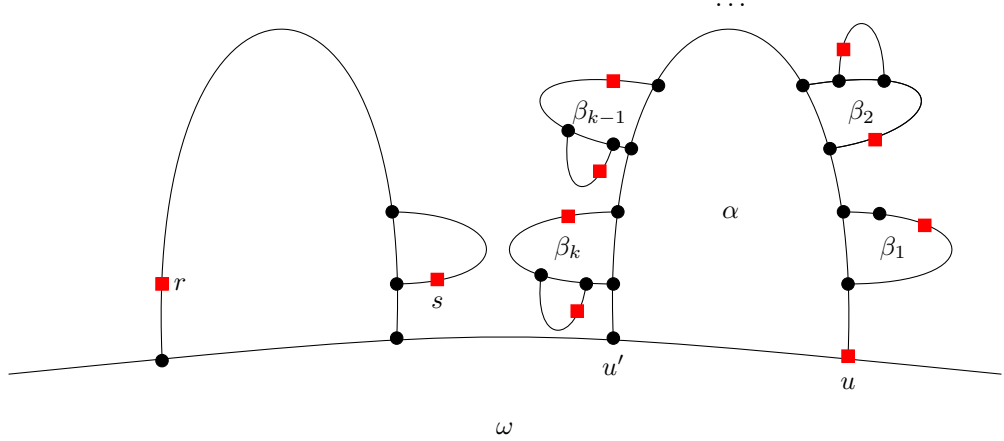
\begin{figure}[t]

\begin{center}

\begin{tikzpicture}[scale=1.2]
\draw  (-3,-0.2) .. controls (2.5,0.35) .. (8,-0.2);
\draw  (-1.3,0) .. controls (-1.5,4.8) and (1.5,4.8) .. (1.3,0.2);
\draw  (3.7,0.2) .. controls (3.5,4.8) and (6.5,4.8) .. (6.3,0);
\draw  (-1.3+5,0.8) .. controls (-2.9+5,0.8) and (-2.8+5,1.6) .. (-1.2+5,1.6);

\draw  (-1.1+5,2.3) .. controls (-2.4+5,2.5) and (-2.6+5,3.3) .. (-0.8+5,3);
\draw  (1.3+5,0.8) .. controls (2.9+5,0.8) and (2.8+5,1.6) .. (1.2+5,1.6);

\draw  (1.25,0.8) .. controls (2.7,0.8) and (2.6,1.6) .. (1.2,1.6);
\node at (1.75,0.85) [regular polygon, regular polygon sides=4,draw=red,fill=red,scale=0.5]{};

\draw  (1.1+5,2.3) .. controls (2.4+5,2.5) and (2.6+5,3.3) .. (0.8+5,3);
\draw  (1.1+5,2.3) .. controls (2.4+5,2.5) and (2.6+5,3.3) .. (0.8+5,3);

\draw  (-1.8+5,2.5) .. controls (-1.9+5,1.7) and (-1.5+5,1.7) .. (-1.3+5,2.35);
\node at (-1.8+5,2.5) [circle,draw=black,fill=black,scale=0.5]{};
\node at (-1.3+5,2.35) [circle,draw=black,fill=black,scale=0.5]{};
\node at (-1.45+5,2.05) [regular polygon, regular polygon sides=4,draw=red,fill=red,scale=0.5]{};

\draw  (1.2+5,3.05) .. controls (1.2+5,3.9) and (1.7+5,3.9) .. (1.7+5,3.05);
\node at (1.2+5,3.05) [circle,draw=black,fill=black,scale=0.5]{};
\node at (1.7+5,3.05) [circle,draw=black,fill=black,scale=0.5]{};
\node at (1.25+5,3.4) [regular polygon, regular polygon sides=4,draw=red,fill=red,scale=0.5]{};

\draw  (-2.1+5,0.9) .. controls (-2.2+5,0.1) and (-1.7+5,0.1) .. (-1.6+5,0.8);
\node at (-2.1+5,0.9)  [circle,draw=black,fill=black,scale=0.5]{};
\node at (-1.6+5,0.8) [circle,draw=black,fill=black,scale=0.5]{};
\node at (-1.7+5,0.5) [regular polygon, regular polygon sides=4,draw=red,fill=red,scale=0.5]{};

\node at (-1.3+5,0.2) [circle,draw=black,fill=black,scale=0.5]{};
\node at (1.3+5,0) [regular polygon, regular polygon sides=4,draw=red,fill=red,scale=0.5]{};
\node at (-1.25+5,1.6) [circle,draw=black,fill=black,scale=0.5]{};
\node at (-1.1+5,2.3) [circle,draw=black,fill=black,scale=0.5]{};
\node at (-0.8+5,3) [circle,draw=black,fill=black,scale=0.5]{};

\node at (1.3+5,0.8) [circle,draw=black,fill=black,scale=0.5]{};
\node at (1.25+5,1.6) [circle,draw=black,fill=black,scale=0.5]{};
\node at (1.1+5,2.3) [circle,draw=black,fill=black,scale=0.5]{};
\node at (0.8+5,3) [circle,draw=black,fill=black,scale=0.5]{};

\node at (1.65+5,1.58) [circle,draw=black,fill=black,scale=0.5]{};
\node at (2.15+5,1.45) [regular polygon, regular polygon sides=4,draw=red,fill=red,scale=0.5]{};
\node at (1.6+5,2.4) [regular polygon, regular polygon sides=4,draw=red,fill=red,scale=0.5]{};

\node at (-1.3+5,0.8) [circle,draw=black,fill=black,scale=0.5]{};
\node at (-1.3+5,3.05) [regular polygon, regular polygon sides=4,draw=red,fill=red,scale=0.5]{};

\node at (-1.8+5,1.55) [regular polygon, regular polygon sides=4,draw=red,fill=red,scale=0.5]{};

\node at (7.3+5,-0.12) [circle,draw=black,fill=black,scale=0.5]{};

\node at (-1.3,-0.05) [circle,draw=black,fill=black,scale=0.5]{};
\node at (1.3,0.2) [circle,draw=black,fill=black,scale=0.5]{};
\node at (1.3,0.8) [circle,draw=black,fill=black,scale=0.5]{};
\node at (1.25,1.6) [circle,draw=black,fill=black,scale=0.5]{};

\node at (-1.3,0.8) [regular polygon, regular polygon sides=4,draw=red,fill=red,scale=0.5]{};

\node at (-1.3+5,-0.1) {$u'$};
\node at (1.3+5,-0.3) {$u$};
\node at (2.5,-0.8) {$\omega$};
\node at (0+5,1.6) {$\alpha$};
\node at (1.8+5,1.2) {$\beta_{1}$};
\node at (1.45+5,2.7) {$\beta_{2}$};
\node at (-1.8+5,1.2) {$\beta_{k}$};
\node at (-1.45+5,2.7){$\beta_{k-1}$};
\node at (0+5,3.9) {$\ldots$};
\node at (-1.1,0.8) {$r$};
\node at (1.75,0.6) {$s$};

\end{tikzpicture}
\end{center}
\caption{Big vertices arising from face $\alpha$ in 0-position (circle: vertex of $A$; square: vertex of $B$, i.e. big vertex).}
\label{fig:0pos}
\end{figure}

Consider the face $\omega$ and its descendants with respect to $\mathcal{T}_G$, and note that the color of their big vertices could already have been determined (with the exception of big vertices arising from $\alpha$). By property (i), all big vertices are at even distance from vertex $u$, and they are at distance at least 4 from $u$ by property (iii). 
We distinguish two kinds of big vertices that were already colored, namely those that are at a shorter distance from $u$ than from $u'$, and those that are at a shorter distance from $u'$ than from $u$. Those that are closer to $u$ than to $u'$ are at distance at least $8$ from big vertices that arise from $\alpha$. (As shown in Figure~\ref{fig:0pos}, big vertices arising from $\alpha$ belong to faces $\beta_1,\ldots,\beta_k$, which are children of $\alpha$ with respect to $\mathcal{T}_G$, or to their children.) On the other hand, there can be big vertices, which are at distance $3$ from $u'$ (and $4$ from $u$), and have already been colored. More precisely, by using property (iii), and the fact that the neighbor of $u'$ that is not in $\alpha$ has at most two other neighbors, we find that there can be at most two such big vertices, which are at distance $3$ from $u'$. If they indeed exists, we denote them by $r$ and $s$, and note that $r,s$ and $u$ are pairwise at distance $4$ from each other. 

Next we analyze possible positions of vertices that arise from $\alpha$. For every vertex $a$ in $C(\alpha)$, which is at an even distance from $u$, there can be a big vertex $a'$ that arises from $\alpha$ such that there is an $a,a'$-path of length $2$ outside $\alpha$ (there is at most one such vertex by property (iii)). On the other hand, for every vertex $b$ in $C(\alpha)$, which is at an odd distance from $u$, there can be a big vertex $b'$ that arises from $\alpha$, which is adjacent to $b$ (with the exception of the neighbors of $u$ in $C(\alpha)$, which cannot be adjacent to another big vertex due to property (iii)). The situation when all these big vertices exist is described in Figure~\ref{fig:case1}.

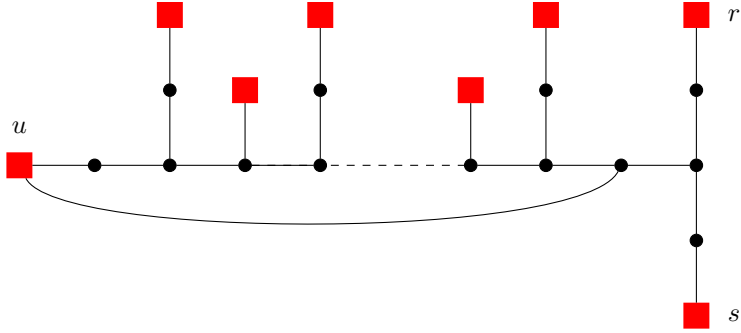
\begin{figure}[ht]
\begin{center}
\begin{tikzpicture}[scale=1]
\node at (0,0.5) {$u$};
\node at (0,0) [regular polygon, regular polygon sides=4,draw=red,fill=red,scale=1](u){};
\node at (1,0) [circle,draw=black,fill=black,scale=0.5](u1){};
\node at (2,0) [circle,draw=black,fill=black,scale=0.5](u2){};
\node at (2,1) [circle,draw=black,fill=black,scale=0.5](u3){};
\node at (2,2) [regular polygon, regular polygon sides=4,draw=red,fill=red,scale=1](u4){};
\node at (3,0) [circle,draw=black,fill=black,scale=0.5](u5){};
\node at (3,1) [regular polygon, regular polygon sides=4,draw=red,fill=red,scale=1](u6){};
\node at (4,0) [circle,draw=black,fill=black,scale=0.5](u7){};
\node at (4,1) [circle,draw=black,fill=black,scale=0.5](u8){};
\node at (4,2) [regular polygon, regular polygon sides=4,draw=red,fill=red,scale=1](u9){};
\node at (6,0) [circle,draw=black,fill=black,scale=0.5](u10){};
\node at (6,1) [regular polygon, regular polygon sides=4,draw=red,fill=red,scale=1](u11){};
\node at (7,0) [circle,draw=black,fill=black,scale=0.5](u12){};
\node at (7,1) [circle,draw=black,fill=black,scale=0.5](u13){};
\node at (7,2) [regular polygon, regular polygon sides=4,draw=red,fill=red,scale=1](u14){};
\node at (8,0) [circle,draw=black,fill=black,scale=0.5](u15){};
\node at (9,0) [circle,draw=black,fill=black,scale=0.5](u16){};
\node at (9,1) [circle,draw=black,fill=black,scale=0.5](u17){};
\node at (9,-1) [circle,draw=black,fill=black,scale=0.5](u18){};
\node at (9,2) [regular polygon, regular polygon sides=4,draw=red,fill=red,scale=1](u20){};
\node at (9,-2) [regular polygon, regular polygon sides=4,draw=red,fill=red,scale=1](u21){};
\node at (9.5, 2) {$r$};
\node at (9.5, -2) {$s$};
\draw (u) -- (u1) -- (u2) -- (u3) -- (u4);
\draw (u2) -- (u5) -- (u6);
\draw (u5) -- (u7) -- (u8) -- (u9);
\draw [style=dashed] (u5) -- (u10);
\draw (u11) -- (u10) -- (u12) -- (u13) -- (u14);
\draw (u12) -- (u15) .. controls (7.5,-1) and (.5,-1) .. (u);
\draw (u15) -- (u16) -- (u17) -- (u20);
\draw (u16) -- (u18) -- (u21);

\end{tikzpicture}
\end{center}
\caption{\label{fig:case1} Case 1 ($\alpha$ in $0$-position).}
\end{figure}

In the most complex case when both vertices $r$ and $s$ exist and are already colored, vertices $u$, $r$ and $s$ are pairwise at distance $4$ from each other. By property (iv), exactly one of these three vertices is colored by a very big color. 
We consider two subcases, depending on the color of vertices $u$, $r$ and $s$. 
Note that we can easily modify our coloring function so that all vertices colored by color $4$ are colored by color $5$ and vice-versa. Also, the same  holds for the vertices colored by color $6$ and $7$ (we can exchange the two color classes).
A consequence is that the way to deal with the case $f(u)=5$, $\{f(r),f(s)\}=\{4,7\}$ is the same as when $f(u)=4$, $\{f(r),f(s)\}=\{5,7\}$ or when $f(u)=5$, $\{f(r),f(s)\}=\{4,6\}$ and that the way to deal with the case $f(u)=7$, $\{f(r),f(s)\}=\{4,5\}$ is the same as when $f(u)=6$, $\{f(r),f(s)\}=\{4,5\}$.
Thus, it suffices to deal only with two subcases: $f(u)=4$, $\{f(r),f(s)\}=\{5,7\}$ and  $f(u)=6$, $\{f(r),f(s)\}=\{4,5\}$.


\medskip

\textbf{Subcase 1.a.} $f(u)=4$ and $\{f(r),f(s)\}=\{5,7\}$. 
We present three patterns that define the coloring of the big vertices with respect to different lengths $n$ of the cycle $C(\alpha)$. The patterns give $f$-values of the big vertices following their presentation in Figure~\ref{fig:case1} (from left to right); in the case when some of the big vertices that arise from $\alpha$ do not exist, we simply skip the corresponding values in the pattern. 

If $n=4$, there is at most one big vertex that arises from $\alpha$, and we color it by $5$. Now, assume that $n\ge6$. Note that the numbers between vertical bars are to be repeated $k$ times (case $k=0$ included).
\medskip

Pattern for length $n=4k+8$:
\begin{tabular}{cc|cccc|ccc}
7 & & 6 & & 7 & & 4 & & 5\\
& 5 & & 4 && 5 && 6 &\\
\end{tabular}
\bigskip

Pattern for length $n=8k+6$:
\begin{tabular}{cc|cccccccc|c}
5 && 5 && 5 && 5 && 5 && 5\\
& 6 && 4 && 7 && 4 && 6 &\\
\end{tabular}

\bigskip
Pattern for length $n=8k+10$:

\medskip

\begin{tabular}{cccccc|cccccccc|c}
5 && 5 && 5 && 5 && 5 && 5 && 5 && 5\\
& 7 && 4 && 6 && 4 && 7 && 4 && 6 &\\
\end{tabular}

\medskip

Note that because $f(u)=4$, the first and the last two values in the patterns cannot be  $4$, and because $\{f(r),f(s)\}=\{5,7\}$, the last two values in the patterns cannot be $7$. Also note that two vertices that correspond to two successive values in the upper row of the pattern are at distance 6, two vertices that correspond to two successive values in the lower row of the pattern are at distance 4, and two vertices that correspond to two consecutive values in the patterns (one in the upper and the other in the lower row) are at distance 4. 

In each of the patterns, one can check that property (iv) holds, and that for any two identical numbers, the distance between the corresponding vertices in $G$ is bigger than this number (i.e., the packing coloring condition is satisfied).
Note that for $k=0$ one needs to omit the numbers that are between the vertical bars in each pattern, which covers the lengths of cycles $n$, where $n\in \{6,8,10\}$.

\textbf{Subcase 1.b.} $f(u)=6$ and $\{f(r),f(s)\}=\{4,5\}$. If $n=4$ we can use color $4$ for the only big vertex that possibly arises from $\alpha$. Now, assume that $n\ge6$.
We present patterns that define the coloring of the big vertices in the case $f(u)=6$, and $\{f(r),f(s)\}=\{4,5\}$. Note that it implies that the first and the last four values in the patterns cannot be $6$. Again the numbers between vertical bars are to be repeated $k$ times (including $k=0$, where we omit the numbers between vertical bars):

\bigskip
Pattern for $n=4k+8$:
\begin{tabular}{cc|cccc|ccc}
5 & & 7 & & 6 & & 7 & & 4\\
& 4 & & 5 && 4 && 5 &\\
\end{tabular}

\bigskip
Pattern for $n=4k+10$:
\begin{tabular}{ccccc|cccc|cc}
5 && 5 && 4 && 6 && 7 && 4\\
& 4 && 7 && 5 && 4 && 5 &\\
\end{tabular}

\medskip 

Pattern for $n=6$:
\begin{tabular}{ccc}
4 & & 4\\
& 5 &\\
\end{tabular}

\medskip 

\bigskip 

\textbf{Case 2.} $\alpha$ is in 1-position, see Figure~\ref{fig:1pos}.

\begin{figure}[t]

\begin{center}

\begin{tikzpicture}[scale=1.2]
\draw  (-3,-0.2) .. controls (2.5,0.35) .. (8,-0.2);
\draw  (-1.3,0) .. controls (-1.5,4.8) and (1.5,4.8) .. (1.3,0.2);
\draw  (3.7,0.2) .. controls (3.9,4.8) and (6.5,4.8) .. (6.3,0);
\draw  (-1.3,0.8) .. controls (-2.9,0.8) and (-2.8,1.6) .. (-1.2,1.6);
\draw  (-1.1,2.3) .. controls (-2.4,2.5) and (-2.6,3.3) .. (-0.8,3);
\node at (-1.3,0) [circle,draw=black,fill=black,scale=0.5]{};
\node at (1.3,0.2) [circle,draw=black,fill=black,scale=0.5]{};
\node at  (3.7,0.2) [circle,draw=black,fill=black,scale=0.5]{};
\node at  (6.3,0) [circle,draw=black,fill=black,scale=0.5]{};
\node at (-1.25,1.6) [circle,draw=black,fill=black,scale=0.5]{};
\node at (-1.1,2.3) [circle,draw=black,fill=black,scale=0.5]{};
\node at (-0.8,3) [circle,draw=black,fill=black,scale=0.5]{};
\draw  (1.3,0.8) .. controls (2.9,0.8) and (2.8,1.6) .. (1.2,1.6);
\draw  (1.1,2.3) .. controls (2.4,2.5) and (2.6,3.3) .. (0.8,3);
\node at (1.3,0.8) [circle,draw=black,fill=black,scale=0.5]{};
\node at (1.25,1.6) [circle,draw=black,fill=black,scale=0.5]{};
\node at (1.1,2.3) [circle,draw=black,fill=black,scale=0.5]{};
\node at (0.8,3) [circle,draw=black,fill=black,scale=0.5]{};

\draw  (1.1,2.3) .. controls (2.4,2.5) and (2.6,3.3) .. (0.8,3);

\draw  (-1.8,2.5) .. controls (-1.9,1.7) and (-1.5,1.7) .. (-1.3,2.35);
\node at (-1.8,2.5) [circle,draw=black,fill=black,scale=0.5]{};
\node at (-1.3,2.35) [circle,draw=black,fill=black,scale=0.5]{};
\node at (-1.45,2.05) [regular polygon, regular polygon sides=4,draw=red,fill=red,scale=0.5]{};

\draw  (1.2,3.05) .. controls (1.2,3.9) and (1.7,3.9) .. (1.7,3.05);
\node at (1.2,3.05) [circle,draw=black,fill=black,scale=0.5]{};
\node at (1.7,3.05) [circle,draw=black,fill=black,scale=0.5]{};
\node at (1.25,3.4) [regular polygon, regular polygon sides=4,draw=red,fill=red,scale=0.5]{};

\draw  (1.6,1.58) .. controls (1.6,2.4) and (2.1,2.4) .. (2.1,1.48);
\node at (1.6,1.58) [circle,draw=black,fill=black,scale=0.5]{};
\node at (2.1,1.48) [circle,draw=black,fill=black,scale=0.5]{};
\node at (1.65,1.9) [regular polygon, regular polygon sides=4,draw=red,fill=red,scale=0.5]{};

\node at (-1.3,0.8) [regular polygon, regular polygon sides=4,draw=red,fill=red,scale=0.5]{};
\node at (1.8,0.85) [regular polygon, regular polygon sides=4,draw=red,fill=red,scale=0.5]{};
\node at (1.6,2.4) [regular polygon, regular polygon sides=4,draw=red,fill=red,scale=0.5]{};
\node at (-1.3,3.05) [regular polygon, regular polygon sides=4,draw=red,fill=red,scale=0.5]{};

\node at (-2.3,-0.12) [circle,draw=black,fill=black,scale=0.5]{};

\node at (3.75,0.8) [regular polygon, regular polygon sides=4,draw=red,fill=red,scale=0.5]{};

\node at (-1.3,-0.3) {$v'$};
\node at (-1.1,0.8) {$v$};
\node at (1.8,0.6) {$p$};
\node at (3.55,0.8) {$r$};
\node at (2.5,-0.8) {$\omega$};
\node at (0,1.6) {$\alpha$};
\node at (1.8,1.2) {$\beta_{1}$};
\node at (1.45,2.7) {$\beta_{2}$};
\node at (-1.8,1.2) {$\beta_{k}$};
\node at (-1.45,2.7){$\beta_{k-1}$};
\node at (0,3.9) {$\ldots$};

\end{tikzpicture}
\end{center}
\caption{Big vertices arising from face $\alpha$ in 1-position (circle: vertex of $A$; square: vertex of $B$, i.e. big vertex).}
\label{fig:1pos}
\end{figure}
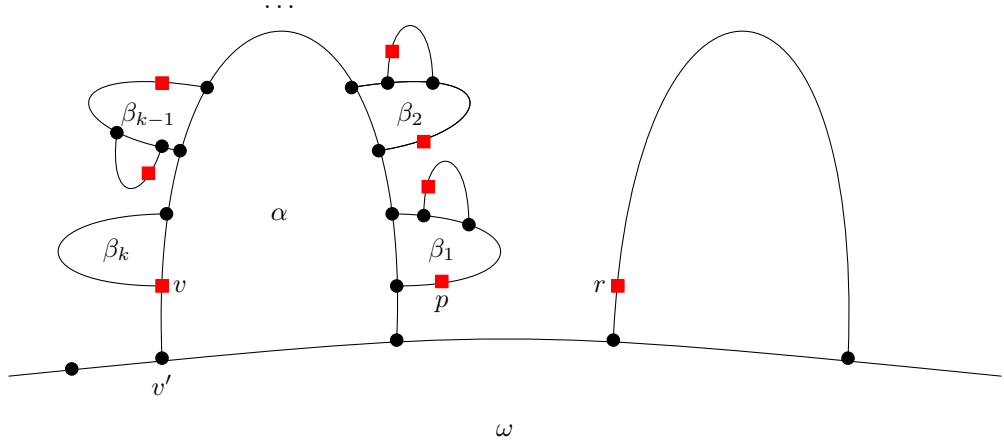

Consider the face $\omega$ and its descendants with respect to $\mathcal{T}_G$, and note that the color of their big vertices could already have been determined (with the exception of big vertices arising from $\alpha$). Also, as the face $\alpha$ is in 1-position, there can be a big vertex arising from $\alpha$, which is at distance 2 from $C(\omega)$. If this vertex exists, we denote it by $p$. By definition, $p$ is a big vertex arising from $\omega$, hence it is already colored. Also, since $v$ is a big vertex arising from $\omega$, it is also already colored.

By property (iii), the neighbors of $v'$, different from $v$, are not in $B$, and there can be at most one big vertex different from $v$ at distance $4$ from $p$, which is already colored. If such a vertex exists, we denote it by $r$. All other big vertices that are already colored are at distance at least $8$ from the vertices arising from $\alpha$.

Similarly as in Case 1 (considering the distance from $v$ and using property (iii)), we find the position of big vertices that possibly arise from $\alpha$, and depict them in Figure~\ref{fig:case2}. Note that some of the big vertices indicated in this figure may not exist in $G$.

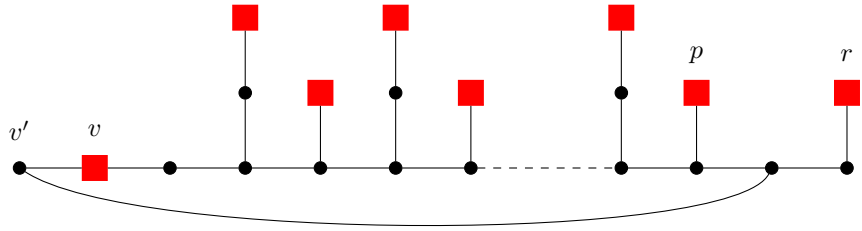
\begin{figure}[ht]
\begin{center}
\begin{tikzpicture}[scale=1]
\node at (-1,0) [circle,draw=black,fill=black,scale=0.5](u0){};
\node at (-1,0.5) {$v'$};
\node at (0,0.5) {$v$};
\node at (0,0) [regular polygon, regular polygon sides=4,draw=red,fill=red,scale=1](u){};
\node at (1,0) [circle,draw=black,fill=black,scale=0.5](u1){};
\node at (2,0) [circle,draw=black,fill=black,scale=0.5](u2){};
\node at (2,1) [circle,draw=black,fill=black,scale=0.5](u3){};
\node at (2,2) [regular polygon, regular polygon sides=4,draw=red,fill=red,scale=1](u4){};
\node at (3,0) [circle,draw=black,fill=black,scale=0.5](u5){};
\node at (3,1) [regular polygon, regular polygon sides=4,draw=red,fill=red,scale=1](u6){};
\node at (4,0) [circle,draw=black,fill=black,scale=0.5](u7){};
\node at (4,1) [circle,draw=black,fill=black,scale=0.5](u8){};
\node at (4,2) [regular polygon, regular polygon sides=4,draw=red,fill=red,scale=1](u9){};
\node at (5,0) [circle,draw=black,fill=black,scale=0.5](u10){};
\node at (7,0) [circle,draw=black,fill=black,scale=0.5](u12){};
\node at (5,1) [regular polygon, regular polygon sides=4,draw=red,fill=red,scale=1](u11){};
\node at (7,1) [circle,draw=black,fill=black,scale=0.5](u13){};
\node at (7,2) [regular polygon, regular polygon sides=4,draw=red,fill=red,scale=1](u14){};
\node at (8,0) [circle,draw=black,fill=black,scale=0.5](u15){};
\node at (8,1) [regular polygon, regular polygon sides=4,draw=red,fill=red,scale=1](u16){};
\node at (8,1.5) {$p$};
\node at (9,0) [circle,draw=black,fill=black,scale=0.5](u17){};
\node at (10,0) [circle,draw=black,fill=black,scale=0.5](u18){};
\node at (10,1) [regular polygon, regular polygon sides=4,draw=red,fill=red,scale=1](u19){};
\node at (10,1.5) {$r$};

\draw (u0) -- (u) -- (u1) -- (u2) -- (u3) -- (u4);
\draw (u2) -- (u5) -- (u6);
\draw (u5) -- (u7) -- (u8) -- (u9);
\draw (u7) -- (u10) -- (u11); 
\draw [style=dashed] (u10) -- (u12);
\draw (u12) -- (u13) -- (u14);
\draw (u12) -- (u15) -- (u16);
\draw (u15) -- (u17) .. controls (8.5,-1) and (.5,-1) .. (u0);
\draw (u17) -- (u18) -- (u19);

\end{tikzpicture}
\end{center}
\caption{\label{fig:case2} Case 2 ($\alpha$ is in 1-position).}
\end{figure}

Since, by property (iii), the vertices $v$, $p$ and $r$ (see Figure~\ref{fig:case2}) are pairwise at distance 4 from each other, exactly one of them has a very big color.  
Hence, in a similar way as in Case 1, we only consider three subcases, depending on the colors of the vertices $v$, $p$ and $r$. 
These three subcases give the way to deal with every possible color configuration for $v$, $p$ and $r$.
(Also, when $r$ or $p$ does not exist and no vertex among $v$, $p$ and $r$ is very big, it is possible to deal with this situation by considering that a non existing vertex among $v$, $p$ and $r$ is very big.) The various cases are described in the following table:

\begin{center}
\begin{tabular}{c|c|c|c}
Subcase & a & b & c\\\hline
$f(v)$ & 6 & 4 & 4\\
$f(p)$ & 4 & 6 & 5\\
$f(r)$ & 5 & 5 & 6\\
\end{tabular}
\end{center}

\medskip

Note that if the length $n$ of $C(\alpha)$ is $4$, there are no big vertices arising from $\alpha$, and this case is trivially resolved. Hence, we suppose that $n\ge6$.

\medskip

\textbf{Subcase 2.a.} $f(v)=6$, $f(p)=4$ and $f(r)=5$. 
We present patterns that define the coloring of the big vertices with respect to different lengths $n$ of the cycle $C(\alpha)$. The patterns give $f$-values of the big vertices in order of their presentation in Figure~\ref{fig:case2}. As in Case 1, if some of the big vertices that arise from $\alpha$ do not exist, we simply skip the corresponding values in the pattern. Note that the numbers between vertical bars are to be repeated $k$ times (case $k=0$ included). Note also that the last value in the patterns represents $f(p)=4$. 
\medskip

\bigskip

Pattern for $n=4k+8$:
\begin{tabular}{cc|cccc|cc}
4 && 7 & & 6 & & 7\\
& 5 & & 4 && 5 && 4\\
\end{tabular}

\bigskip

Pattern for $n=4k+10$:
\begin{tabular}{cc|cccc|cccc}
4 && 7 && 6 && 4 && 5\\
& 5 && 4 && 5 && 7 && 4\\
\end{tabular}

\bigskip

Note that because $f(v)=6$, the first four and the last three values (including the value of $p$) in the patterns cannot be $6$. In each of the patterns, one can check that property (iv) holds. 

Cases $k=0$, where the values between vertical bars are omitted, cover $n\in \{8,10\}$, 
while the pattern for length $n=6$ is:
\begin{tabular}{cc}
5 & \\
& 4 \\
\end{tabular}

\medskip

\textbf{Subcase 2.b.} $f(v)=4$, $f(p)=6$ and $f(r)=5$.  We present patterns that define the coloring of the big vertices with respect to different lengths $n$ of the cycle $C(\alpha)$.  Note that the numbers between vertical bars are to be repeated $k$ times (case $k=0$ included). The last value in the patterns represents $f(p)=6$. 

\medskip

Pattern for $n=4k+6$:
\begin{tabular}{|cccc|cc}
6 & & 7 & & 5\\
& 5 && 4 && 6\\
\end{tabular}

\bigskip
Pattern for $n=4k+8$:
\begin{tabular}{cc|cccc|cc}
7 && 6 && 7 && 4\\
& 5 && 4 && 5 && 6\\
\end{tabular}

\bigskip

Note that $f(v)=4$ implies that patterns must avoid having the first two values equal to $4$. Cases $n\in\{6,8\}$ are covered by the above patterns with $k=0$ (i.e., removing the values between vertical bars). 

\medskip

\textbf{Subcase 2.c.} $f(v)=4$, $f(p)=5$ and $f(r)=6$. The following two patterns define the coloring of the big vertices (the numbers between vertical bars are to be repeated $k$ times, with $k=0$ included, covering $n=6$ and $n=8$). The last value in the patterns represents $f(p)=5$. 

\bigskip

Pattern for $n=4k+6$:
\begin{tabular}{|cccc|cc}
7 & & 6 & & 7&\\
& 5 && 4 && 5\\
\end{tabular}
\bigskip

Pattern for $n=4k+8$:
\begin{tabular}{|cccc|cccc}
7 && 6 && 5 && 4 &\\
&5 && 4 && 7 && 5\\
\end{tabular}

\bigskip

In the above patterns, in particular, the first two values cannot be $4$, as $f(v)=4$, and  the last three values cannot be $6$, as $f(r)=6$.

\bigskip

\textbf{Case 3.} $\alpha$ is in 2-position, see Figure~\ref{fig:2pos}.

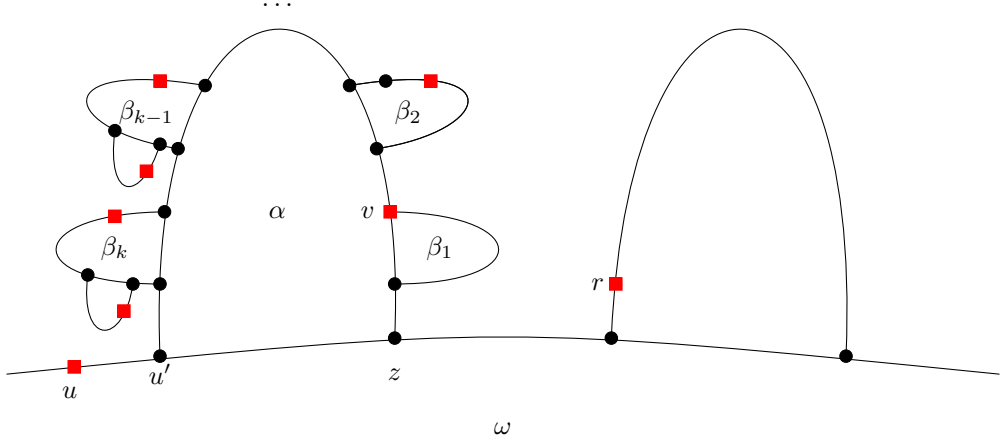
\begin{figure}[t]

\begin{center}

\begin{tikzpicture}[scale=1.2]
\draw  (-3,-0.2) .. controls (2.5,0.35) .. (8,-0.2);
\draw  (-1.3,0) .. controls (-1.5,4.8) and (1.5,4.8) .. (1.3,0.2);
\draw  (3.7,0.2) .. controls (3.9,4.8) and (6.5,4.8) .. (6.3,0);
\draw  (-1.3,0.8) .. controls (-2.9,0.8) and (-2.8,1.6) .. (-1.2,1.6);
\draw  (-1.1,2.3) .. controls (-2.4,2.5) and (-2.6,3.3) .. (-0.8,3);
\draw  (1.3,0.8) .. controls (2.9,0.8) and (2.8,1.6) .. (1.2,1.6);
\draw  (1.1,2.3) .. controls (2.4,2.5) and (2.6,3.3) .. (0.8,3);
\draw  (1.1,2.3) .. controls (2.4,2.5) and (2.6,3.3) .. (0.8,3);

\draw  (-1.8,2.5) .. controls (-1.9,1.7) and (-1.5,1.7) .. (-1.3,2.35);
\node at (-1.8,2.5) [circle,draw=black,fill=black,scale=0.5]{};
\node at (-1.3,2.35) [circle,draw=black,fill=black,scale=0.5]{};
\node at (-1.45,2.05) [regular polygon, regular polygon sides=4,draw=red,fill=red,scale=0.5]{};

\node at (1.2,3.05) [circle,draw=black,fill=black,scale=0.5]{};
\node at (1.7,3.05) [regular polygon, regular polygon sides=4,draw=red,fill=red,scale=0.5]{};

\draw  (-2.1,0.9) .. controls (-2.2,0.1) and (-1.7,0.1) .. (-1.6,0.8);
\node at (-2.1,0.9)  [circle,draw=black,fill=black,scale=0.5]{};
\node at (-1.6,0.8) [circle,draw=black,fill=black,scale=0.5]{};
\node at (-1.7,0.5) [regular polygon, regular polygon sides=4,draw=red,fill=red,scale=0.5]{};

\node at (-1.3,0) [circle,draw=black,fill=black,scale=0.5]{};
\node at (1.3,0.2) [circle,draw=black,fill=black,scale=0.5]{};
\node at  (3.7,0.2) [circle,draw=black,fill=black,scale=0.5]{};
\node at  (6.3,0) [circle,draw=black,fill=black,scale=0.5]{};
\node at (-1.25,1.6) [circle,draw=black,fill=black,scale=0.5]{};
\node at (-1.1,2.3) [circle,draw=black,fill=black,scale=0.5]{};
\node at (-0.8,3) [circle,draw=black,fill=black,scale=0.5]{};

\node at (1.3,0.8) [circle,draw=black,fill=black,scale=0.5]{};
\node at (1.25,1.6) [regular polygon, regular polygon sides=4,draw=red,fill=red,scale=0.5]{};
\node at (1.1,2.3) [circle,draw=black,fill=black,scale=0.5]{};
\node at (0.8,3) [circle,draw=black,fill=black,scale=0.5]{};

\node at (-1.3,0.8) [circle,draw=black,fill=black,scale=0.5]{};
\node at (-1.3,3.05) [regular polygon, regular polygon sides=4,draw=red,fill=red,scale=0.5]{};

\node at (-1.8,1.55) [regular polygon, regular polygon sides=4,draw=red,fill=red,scale=0.5]{};

\node at (-2.25,-0.12) [regular polygon, regular polygon sides=4,draw=red,fill=red,scale=0.5]{};
\node at (3.75,0.8) [regular polygon, regular polygon sides=4,draw=red,fill=red,scale=0.5]{};
\node at (3.55,0.8) {$r$};
\node at (-2.3,-0.4) {$u$};
\node at (-1.3,-0.2) {$u'$};
\node at (1.3,-0.2) {$z$};
\node at (1,1.6) {$v$};
\node at (2.5,-0.8) {$\omega$};
\node at (0,1.6) {$\alpha$};
\node at (1.8,1.2) {$\beta_{1}$};
\node at (1.45,2.7) {$\beta_{2}$};
\node at (-1.8,1.2) {$\beta_{k}$};
\node at (-1.45,2.7){$\beta_{k-1}$};
\node at (0,3.9) {$\ldots$};

\end{tikzpicture}
\end{center}
\caption{Big vertices arising from face $\alpha$ in 2-position (circle: vertex of $A$; square: vertex of $B$, i.e. big vertex).}
\label{fig:2pos}
\end{figure}

Again we consider the face $\omega$ and its descendants with respect to $\mathcal{T}_G$, and note that the color of their big vertices could already have been determined (with the exception of big vertices arising from $\alpha$). Also, by the definition of 2-position of the face $\alpha$, $u$ and $v$ are already colored. There can be at most one additional vertex, which is already colored and is at distance $6$ to a big vertex arising from $\alpha$ that is not yet colored; if this vertex exists, we denote it by $r$, see Figure~\ref{fig:2pos}.

Similarly as in Case 1 (considering the distance from $v$ and using property (iii)), we find the possible positions of big vertices that arise from $\alpha$, and depict them in Figure~\ref{fig:case3}. Note that some of the big vertices indicated in this figure may not exist in $G$. 

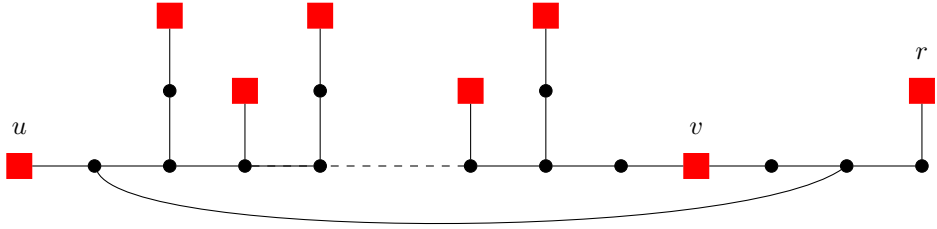
\begin{figure}[ht]
\begin{center}
\begin{tikzpicture}[scale=1]
\node at (0,0.5) {$u$};
\node at (0,0) [regular polygon, regular polygon sides=4,draw=red,fill=red,scale=1](u){};
\node at (1,0) [circle,draw=black,fill=black,scale=0.5](u1){};
\node at (2,0) [circle,draw=black,fill=black,scale=0.5](u2){};
\node at (2,1) [circle,draw=black,fill=black,scale=0.5](u3){};
\node at (2,2) [regular polygon, regular polygon sides=4,draw=red,fill=red,scale=1](u4){};
\node at (3,0) [circle,draw=black,fill=black,scale=0.5](u5){};
\node at (3,1) [regular polygon, regular polygon sides=4,draw=red,fill=red,scale=1](u6){};
\node at (4,0) [circle,draw=black,fill=black,scale=0.5](u7){};
\node at (4,1) [circle,draw=black,fill=black,scale=0.5](u8){};
\node at (4,2) [regular polygon, regular polygon sides=4,draw=red,fill=red,scale=1](u9){};
\node at (6,0) [circle,draw=black,fill=black,scale=0.5](u10){};
\node at (6,1) [regular polygon, regular polygon sides=4,draw=red,fill=red,scale=1](u11){};
\node at (7,0) [circle,draw=black,fill=black,scale=0.5](u12){};
\node at (7,1) [circle,draw=black,fill=black,scale=0.5](u13){};
\node at (7,2) [regular polygon, regular polygon sides=4,draw=red,fill=red,scale=1](u14){};
\node at (8,0)  [circle,draw=black,fill=black,scale=0.5] (u14b){};
\node at (9,0) [regular polygon, regular polygon sides=4,draw=red,fill=red,scale=1](u15){};
\node at (9,.5) {$v$};
\node at (10,0) [circle,draw=black,fill=black,scale=0.5](u16){};
\node at (11,0) [circle,draw=black,fill=black,scale=0.5](u17){};
\node at (12,0) [circle,draw=black,fill=black,scale=0.5](u18){};
\node at (12,1) [regular polygon, regular polygon sides=4,draw=red,fill=red,scale=1](u19){};
\node at (12,1.5) {$r$};

\draw (u) -- (u1) -- (u2) -- (u3) -- (u4);
\draw (u2) -- (u5) -- (u6);
\draw (u5) -- (u7) -- (u8) -- (u9);
\draw [style=dashed] (u5) -- (u10);
\draw (u11) -- (u10) -- (u12) -- (u13) -- (u14);
\draw (u12) -- (u15) -- (u16) -- (u17) .. controls (9.5,-1) and (1.5,-1) .. (u1);
\draw (u17) -- (u18) -- (u19);
\end{tikzpicture}
\end{center}
\caption{\label{fig:case3} Case 3 ($\alpha$ is in 2-position).}
\end{figure}

Since the vertices $u$, $v$ and $r$ (see Figures~\ref{fig:2pos} and~\ref{fig:case3}) are pairwise at distance 4 from each other, exactly one of them has a very big color.  
Hence, in a similar way as in Case 1, we only we consider three subcases, depending on which vertex among $u$, $v$ and $r$ is very big. 
These three subcases give the way to deal with every possible color configuration for $u$, $v$ and $r$.
(Also, in the case $r$ does not exist and no vertex among $u$ and $v$ is very big, it is possible to deal with this situation by considering that $r$ is very big.)
The cases are described in the following table:

\begin{center}
\begin{tabular}{c|c|c|c}
Subcase & a & b & c\\\hline
$f(u)$ & 4 & 6 & 5\\
$f(v)$ & 6 & 4 & 4\\
$f(r)$ & 5 & 5 & 6\\
\end{tabular}
\end{center}

\medskip

If $n=4$ there are no big vertices arising from $\alpha$. Hence, we suppose that $n\ge6$.

\medskip

\textbf{Subcase 3.a.} $f(u)=4$, $f(v)=6$ and $f(r)=5$. The following two patterns define the coloring of the big vertices (the numbers between vertical bars are to be repeated $k$ times, including $k=0$).

\medskip

Pattern for $n=4k+8$:
\begin{tabular}{|cccc|ccc}
7 & & 6 & & 7 & & 4\\
& 5 & & 4 && 5 &\\
\end{tabular}
\bigskip

Pattern for $n=4k+10$:
\begin{tabular}{|cccc|ccccc}
7 && 6 && 5 && 5 && 5\\
& 5 && 4 && 7 && 4 &\\
\end{tabular}

\medskip 

Note that because $f(v)=6$ and $f(u)=4$, the last four values in the patterns cannot be $6$ and the first two values in the patterns cannot be $4$ or $6$. In each of the patterns, one can check that property (iv) holds. 

\medskip

For $n=6$, there can be at most one big vertex arising from $\alpha$, and we can color it by $5$. For $n=4$ there are no big vertices arising from $\alpha$. 

\medskip

\textbf{Subcase 3.b.} $f(u)=6$, $f(v)=4$ and $f(r)=5$. The following two patterns define the coloring of the big vertices (the numbers between vertical bars are to be repeated $k$ times, including $k=0$).

\medskip

Pattern for $n=4k+8$:
\begin{tabular}{cc|cccc|c}
4 & & 7 & & 6 & & 7\\
& 5 && 4 && 5 &\\
\end{tabular}

\bigskip
Pattern for $n=4k+10$:
\begin{tabular}{cccc|cccc|c}
5 && 7 && 6 && 7 && 6\\
& 4 && 5 && 4 && 5 &\\
\end{tabular}

\medskip 

Note that because $f(u)=6$ and $f(v)=4$, the first four values in the patterns cannot be $6$ and the last two values in the patterns cannot be $4$. In each of the patterns, one can check that property (iv) holds. 

For $n=6$, there can be at most one big vertex arising from $\alpha$, and we can color it by $5$. For $n=4$ there are no big vertices arising from $\alpha$. 

\medskip

\textbf{Subcase 3.c.} $f(u)=5$, $f(v)=4$ and $f(r)=6$. Finally, the following two patterns define the coloring of the big vertices (the numbers between vertical bars are to be repeated $k$ times, $k=0$ included).

Pattern for $n=4k+6$:
\begin{tabular}{|cccc|c}
7 & & 6 & & 7\\
& 4 && 5 &\\
\end{tabular}
\bigskip

Pattern for $n=4k+8$:
\begin{tabular}{|cccc|ccc}
7 && 6 && 4 && 5\\
& 4 && 5 && 7 &\\
\end{tabular}

\medskip 

Note that because $f(r)=6$, $f(v)=4$ and $f(u)=5$, the first two values in the patterns cannot be $5$ or $6$ and the last two values in the patterns cannot be $4$. In each of the patterns, one can check that property (iv) holds. 

It is straightforward to see that in all of the above patterns property (iv) is satisfied, and that the resulting coloring $f$ is a packing coloring that uses $7$ colors. This completes the proof. 
 \qed


\bigskip 

We complete this section by showing that the upper bound of $7$ on the packing chromatic number of 2-connected bipartite subcubic outerplanar graphs is sharp. 

\begin{prop}
There exists a $2$-connected bipartite subcubic outerplanar graph $G$ such that $\chi_{\rho}(G)\ge 7$.
\end{prop}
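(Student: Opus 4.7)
The plan is to exhibit an explicit 2-connected bipartite subcubic outerplanar graph $G$ and show that $G$ admits no $(1,2,3,4,5,6)$-packing coloring. A natural family of candidates is the linear polyhex $H_k$, obtained by gluing $k$ hexagons consecutively along edges to form a chain. Each $H_k$ has $4k+2$ vertices; its weak dual is a path; every inner face is a $6$-cycle, so $H_k$ is bipartite; every vertex has degree at most $3$; and $H_k$ admits an outerplanar embedding with all vertices on the outer face. Thus the structural requirements are automatic, and the real work is concentrated on the lower bound.

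To prove $\chi_{\rho}(H_k) \ge 7$ for $k$ sufficiently large, my plan is to assume for contradiction that a $(1,\ldots,6)$-packing coloring $c$ of $H_k$ exists, and to bound the density of each color class. In a subcubic graph, any two vertices $x,y$ with $c(x)=c(y)=i$ have disjoint closed balls $B(x,\lfloor i/2 \rfloor)$ and $B(y,\lfloor i/2 \rfloor)$, and in the highly regular graph $H_k$ these balls have explicitly computable sizes away from the boundary. This yields upper bounds of the form $|c^{-1}(1)| / |V(H_k)| \le 1/2$ (independent-set bound for a bipartite graph) and $|c^{-1}(i)| / |V(H_k)| \le \beta_i$ for $i \ge 2$, where the constants $\beta_i$ depend only on $i$ and the local structure of $H_k$. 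By choosing $k$ large enough that boundary effects become negligible, the expected outcome is $\tfrac{1}{2} + \beta_2 + \beta_3 + \beta_4 + \beta_5 + \beta_6 < 1$, which contradicts the fact that $c$ colors every vertex.

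The main obstacle will be obtaining density bounds $\beta_i$ that are sharp enough to yield the strict inequality above: the contributions of the sparse classes $i=5,6$ are small, but $\beta_2$ and $\beta_3$ must be controlled carefully since the corresponding balls are small and the bound is tight. If a purely analytic bound proves too loose, an alternative plan is to fix a small specific member of the family (or a small graph obtained by attaching a few additional hexagons so as to keep $2$-connectivity, bipartiteness and outerplanarity) and verify $\chi_{\rho} \ge 7$ by a finite case analysis: branch on the color of a chosen central vertex $v$, use property (i)-style arguments from the proof of Theorem~\ref{mainth} in reverse to propagate forced restrictions along the hexagonal chain, and show that every branch leads to a vertex with no admissible color.
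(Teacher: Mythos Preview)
There is a genuine gap: the linear polyhex $H_k$ satisfies $\chi_\rho(H_k)\le 5$, so it cannot witness the bound $\chi_\rho\ge 7$, and neither of your two plans can succeed on it. To see the upper bound, label the vertices $t_0,\ldots,t_{2k}$ (top path), $b_0,\ldots,b_{2k}$ (bottom path), with rungs $t_{2j}b_{2j}$. Put color $1$ on $\{t_i:i\text{ even}\}\cup\{b_i:i\text{ odd}\}$ and list the remaining vertices as $v_m$ ($v_m=b_m$ for $m$ even, $v_m=t_m$ for $m$ odd). One checks that $d_G(v_m,v_{m'})=|m-m'|$ if $m,m'$ have the same parity and $|m-m'|+1$ otherwise. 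The periodic pattern $2,3,4,2,3,5$ on the $v_m$'s then gives color $2$ and $3$ at index-gaps $3$ (so $d_G\ge 4$) and colors $4,5$ at index-gaps $6$ (so $d_G\ge 6$): a valid packing $5$-coloring. More broadly, your density argument cannot work in \emph{any} subcubic bipartite graph: the ball bounds already give $\beta_1+\beta_2+\beta_3\ge \tfrac12+\tfrac14+\tfrac14=1$, so $\sum_{i\le 6}\beta_i>1$ and no contradiction follows.

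The paper proceeds entirely differently. It quotes Sloper's theorem that the infinite binary tree has $\chi_\rho=7$, so some finite subcubic tree $T$ of depth $d$ has $\chi_\rho(T)=7$. It then builds $G$ iteratively: start from an even cycle of length $k=2d+2$, and repeatedly attach a $k$-cycle to every adjacent pair of degree-$2$ vertices. The resulting graph is $2$-connected, bipartite, subcubic and outerplanar, and the cycle length is large enough that the ball of radius $d$ about a fixed vertex induces a tree containing $T$; monotonicity of $\chi_\rho$ under subgraphs gives $\chi_\rho(G)\ge 7$. The structural point you are missing is that forcing $\chi_\rho\ge 7$ requires local exponential branching (enough to embed a deep binary tree), which a one-dimensional hexagonal chain cannot supply; ``attaching a few additional hexagons'' will not repair this unless you branch systematically, and doing that carefully is precisely the paper's construction.
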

\begin{proof}
Let $\mathcal{T}$ be the infinite binary tree. Sloper~\cite{Slo} has proven that $\chi_{\rho}(\mathcal{T})=7$. A consequence is that there exists a finite subcubic tree $T$ such that $\chi_{\rho}(T)=7$. Let $d$ be the depth of $T$. Finally, let $k=2d+2$.

Let $x$ and $y$ be two adjacent vertices of degree $2$ in a graph $G$. {\em Adding a $k$-cycle on $x$ and $y$} is an operation that consists in adding a path of $k-2$ vertices to $G$ and joining one endvertex of the path to $x$ and the other endvertex to $y$.
Let $G_1$ be a cycle of order $k$. Let $G_{i+1}$, $i\ge 1 $, be the graph obtained from $G_i$ by adding a $k$-cycle on every adjacent pair of vertices of degree $2$ in $G_i$, where we arrange these pairs in such a way that each vertex of degree $2$ belongs to one adjacent pair; this can be done by adding $k$-cycles on adjacent vertices of degree $2$ following the outer cycle of $G_i$. In this way, in $G_{i+1}$ there does not remain any vertex of degree $2$ from $V(G_i)$. Note that, by construction, $G_i$ is a $2$-connected bipartite subcubic outerplanar graph for every integer $i\ge 1$.

Let $u$ be a vertex that belongs to $G_1$ in the construction of $G_{k}$. Note that the set $\{v\in V(G_{k})|\ d(u,v)\le d \}$ induces a subcubic tree containing $T$ as an induced subgraph. Thus, since every graph has a packing chromatic number larger than or equal to the packing chromatic number of any of its (induced) subgraphs, we derive $\chi_{\rho}(G_{k})\ge 7$.
\end{proof}


\section{$(1,3,\ldots,3)$-packing coloring of bipartite outerplanar
graphs} 
\label{sec:133}

In this section, we need to extend the definition of $\mathcal{T}_G$ in order to have an underlying tree even if the outerplanar graph is not 2-connected.
For an outerplanar graph $G$, let $D=\{ u\in V(G)|\  u\notin C(v),\ v\in V(\mathcal{T}_G) \}$ and let $A=V(G)\setminus D$.
Note that the graph with vertex set $V(\mathcal{T}_G)\cup D$ and edge set $E( \mathcal{T}_G)\cup E(G[D])$ is a forest.
We construct $\mathcal{L}_G$ from the forest with vertex set $V(\mathcal{T}_G)\cup D$ and edge set $E( \mathcal{T}_G)\cup E(G[D])$ as follows. 
First, for each bridge $uv$ of $G$ such that $u\in A$ and $v\in D$, we add an edge to $\mathcal{L}_G$ between $v$ and an arbitrary face $\alpha$ containing $u$. 
Second, for each bridge $uv$ of $G$ such that $u\in A$ and $v\in A$, we add an edge to $\mathcal{L}_G$ between an arbitrary face $\alpha$ containing $u$ and an arbitrary face $\beta$ containing $v$. Third, let $G'$ be the graph obtained from $G$ by removing the bridges. 
For a cut vertex $u$ of $G'$, let $B_1$, $\ldots$, $B_k$ be the maximal 2-connected components of $G'$ containing $u$ and let $\alpha_i(u)$ be a face chosen arbitrarily among the faces from $B_i$ containing $u$, $1\le i\le k$. For each cut vertex $u$ and each integer $i$ between 2 and $k$, we add an edge to $\mathcal{L}_G$ between $\alpha_1(u)$ and $\alpha_i(u)$.

It is easily seen that the graph $\mathcal{L}_G$ is a tree for any outerplanar graph $G$. 
Figure~\ref{graphL} illustrates an example construction of the tree $\mathcal{L}_G$ for an outerplanar graph $G$.

\begin{figure}[ht]
\begin{center}
\begin{tikzpicture}[scale=1.8]
\draw (1,0) -- (1.5,-0.5);
\draw (1,0) -- (1.5,0.5);
\draw (2,0) -- (1.5,-0.5);
\draw (2,0) -- (1.5,0.5);
\draw (1,0) -- (7,0);
\draw (6,0) -- (7,-0.5);
\draw (3,0) -- (3.5,-0.5);
\draw (3,0) -- (3.5,0.5);
\draw (4,0) -- (3.5,-0.5);
\draw (4,0) -- (3.5,0.5);
\draw (4,0) -- (4.5,-0.5);
\draw (4,0) -- (4.5,0.5);
\draw (5,0) -- (4.5,-0.5);
\draw (5,0) -- (4.5,0.5);

\draw[color=red, dashed,ultra thick] (1.5,0.25) -- (1.5,-0.25);
\draw[color=red, dashed,ultra thick] (1.5,0.25) -- (3.5,-0.25);
\draw[color=red, dashed,ultra thick] (3.5,0.25) -- (3.5,-0.25);
\draw[color=red, dashed,ultra thick] (3.5,0.25) -- (4.5,0.25);
\draw[color=red, dashed,ultra thick] (4.5,0.25) -- (4.5,-0.25);
\draw[color=red, dashed,ultra thick] (4.5,0.25) -- (6,0);
\draw[color=red, dashed,ultra thick] (6,0) -- (7,0);
\draw[color=red, dashed,ultra thick] (6,0) -- (7,-0.5);

\node at (1,0) [circle,draw=black,fill=black,scale=0.5]{};
\node at (1.5,-0.5) [circle,draw=black,fill=black,scale=0.5]{};
\node at (1.5,0.5) [circle,draw=black,fill=black,scale=0.5]{};
\node at (2,0) [circle,draw=black,fill=black,scale=0.5]{};
\node at (3,0) [circle,draw=black,fill=black,scale=0.5]{};
\node at (3.5,-0.5) [circle,draw=black,fill=black,scale=0.5]{};
\node at (3.5,0.5) [circle,draw=black,fill=black,scale=0.5]{};
\node at (4,0) [circle,draw=black,fill=black,scale=0.5]{};
\node at (4.5,-0.5) [circle,draw=black,fill=black,scale=0.5]{};
\node at (4.5,0.5) [circle,draw=black,fill=black,scale=0.5]{};
\node at (5,0) [circle,draw=black,fill=black,scale=0.5]{};
\node at (6,0) [circle,draw=black,fill=black,scale=0.5]{};
\node at (7,0) [circle,draw=black,fill=black,scale=0.5]{};
\node at (7,-0.5) [circle,draw=black,fill=black,scale=0.5]{};

\node at (1.5,-0.25) [regular polygon, regular polygon sides=4,draw=red,fill=red,scale=0.5]{};
\node at (1.5,0.25) [regular polygon, regular polygon sides=4,draw=red,fill=red,scale=0.5]{};
\node at (3.5,-0.25) [regular polygon, regular polygon sides=4,draw=red,fill=red,scale=0.5]{};
\node at (3.5,0.25) [regular polygon, regular polygon sides=4,draw=red,fill=red,scale=0.5]{};
\node at (4.5,-0.25) [regular polygon, regular polygon sides=4,draw=red,fill=red,scale=0.5]{};
\node at (4.5,0.25) [regular polygon, regular polygon sides=4,draw=red,fill=red,scale=0.5]{};

\end{tikzpicture}
\end{center}
\caption{\label{graphL} The graph $\mathcal{L}_G$ for an outerplanar graph $G$ (circle: vertex of $G$; square: vertex of $\mathcal{T}_G$; line: edge of $G$; dashed line: edge of $\mathcal{L}_G$).}
\end{figure}
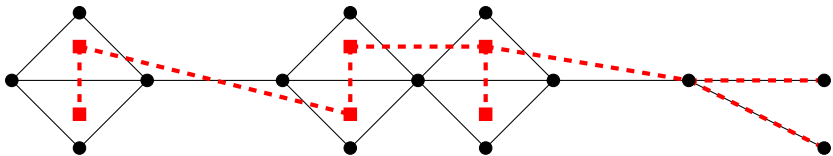

First, we prove Theorem~\ref{th:133}, which states that a bipartite outerplanar graph with maximum degree $\Delta$ bounded by $k$ is $(1,3,\ldots,3)$-colorable, where $3$ appears $k$ times in the sequence, $k\ge 3$.

\medskip 

\noindent {\bf Proof of Theorem~\ref{th:133}.}

In the construction of the $S$-packing coloring, the vertices with color $1$
will form an independent set and the vertices with color $a_i$,
$i\in\{1,\ldots,k\}$ will form a $3$-packing.
The proof is by induction on the order of $\mathcal{L}_G$. If $G$
has more than one vertex, vertices colored by $1$ will correspond to one
part of the bipartition of $G$.

For the induction, suppose that $\mathcal{L}_G$  contains only one
vertex $u$. If $u\in D$, then it suffices to color $u$ with color $1$.
If $u\in V(\mathcal{T}_G)$, then we have to color the cycle $C(u)$.
We can clearly color it with colors  $1$, $a_1$, $a_2$ and $a_3$
by (repeatedly) using the pattern $1,a_1,1,a_2$, and using the color $1,a_3$ for the
last two vertices when the length of the cycle is not divisible by
four.

Now, consider a graph $G$ such that $\mathcal{L}_G$  has order
$n+1$, $n\ge 1$. Let $u$ be a leaf of $\mathcal{L}_G$ and let $G'$ be the
following graph:
$$
G' = \left\{
    \begin{array}{ll}
        G-u & \mbox{if } u \in D \\
        G-B(u) & \mbox{if } u\in V(\mathcal{T}_G)\mbox{;}
    \end{array}
\right.
$$
where $B(u)$ is the set of the vertices of $V(G)\setminus D$, which
belong to $C(u)$ but no other inner face of $G$. By induction, we
can color the vertices of $G'$, since $\mathcal{L}_{G'}$ has order
$n$, and it suffices to extend the coloring of $G'$ to the uncolored
vertices of $G$. Let $v$ be the neighbor of $u$ in
$\mathcal{L}_{G'}$.
\begin{description}
\item[Case 1] $u\in D$.

Let $v'$ denote the neighbor of $u$ in $C(v)$ if $v\in
 \mathcal{T}_G$, and $v'=v$ if $v\in D$. If $v'$ is not colored by
$1$, then we can color $u$ by $1$ and we are done.
Otherwise, when $v'$ is colored by color $1$, then note that $v'$
has at most $k-1$ colored neighbors, whose neighbors are all colored
by $1$. Therefore, we can color $u$ by a color $a_i$, which is not
given to any vertices of $N_G(v')$, since other vertices with color $a_i$
are at distance at least 4 from $u$.

\item[Case 2] $u\in V(\mathcal{T}_G)$ and $v\in D$.

Let $u'$ be the neighbor of $v$ in $C(u)$. Firstly, if $v$ is
colored by $1$, then, since $\Delta(G)\le k$, at most $k-1$
neighbors of $v$ are colored in $G$, and we can color $u'$ by a
color $a_i$ not used in other neighbors of $v$. We can color the
remaining uncolored vertices of $C(u)$ by using the color $1$ and
three more colors (proceeding in the same way as in the coloring of
a cycle described in the initial step of the induction). Secondly,
if $v$ is not colored by color $1$, then we color $u'$ with color $1$.
We can again extend the coloring to the remaining vertices of $C(u)$
using color $1$ and three more colors in an analogous way as in the
initial step of the induction. We just need to avoid that a neighbor of $u'$ is assigned the same color as $v$, which is always possible since $k\ge 3$.

\item[Case 3] $u\in V(\mathcal{T}_G)$ and $v\in V(\mathcal{T}_G)$.

\item[Subcase 3.a] $|V(C(u))\cap V(C(v))|=0$.

In this case, a vertex $u'\in C(u)$ is adjacent to a vertex $v'\in
C(v)$. For the proof of this case, we follow the same steps as in
Case 2, where the vertex $v'$, defined in the previous sentence,
plays the role of $v$ in Case 2.

\item[Subcase 3.b] $|V(C(u))\cap V(C(v))|=1$.

Let $\{w\}=V(C(u))\cap V(C(v))$. Suppose $w$ is colored by color
$1$. Since $w$ is in the uncolored facial cycle $C(u)$, at most $k-2$
neighbors of $w$ are colored so far  in $G$. Thus, we can give two
(distinct) colors not used by neighbors of $w$ to the two neighbors of $w$ in $C(u)$ and easily extend the coloring to the remaining uncolored vertices of
$C(u)$ using these two colors, color 1 and possibly a third color $a_i$ (that will be given to a vertex at distance at least $4$ from any vertex with color $a_i$ of another cycle). Now, if $w$ is not
colored with color $1$, then every neighbor of $w$ can be colored
with color $1$. It is again easy to color the remaining uncolored
vertices of $C(u)$ by using color $1$ and three more colors,
applying the pattern as in the initial step of the induction.

\item[Subcase 3.c] $|V(C(u))\cap V(C(v))|=2$.

Let $\{w_1,w_2\}=V(C(u))\cap V(C(v))$. Since $G$ is outerplanar,
$w_1$ and $w_2$ are adjacent in $G$. Without loss of generality
suppose $w_1$ is colored by color $1$. By the induction hypothesis $w_2$
is already colored. Let $x$ be the neighbor of $w_1$ (which is not
$w_2$) in $C(u)$. Since the vertex $w_1$ has at most $k-1$ colored
neighbors, we can color the vertex $x$ by a color $a_i$ that is not
used in any of the neighbors of $w_1$. The remaining vertices of
$C(u)$ can be colored by using color $1$ and three more colors (that of $w$, of $x$ and a third one) in
the same way as in the previous cases.
\end{description}
\qed

In the following proposition, we prove that Theorem~\ref{th:133} does not hold if in $S$ an integer $3$ is replaced by an integer $4$.

\begin{prop}
There exists a bipartite outerplanar graph $G$ with $\Delta(G)\le
k$, which is not $S$-packing colorable for the list
$S=(1,3,\ldots,3,4)$ containing $k-1$ times the integer $3$.
\end{prop}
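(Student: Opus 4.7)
I propose constructing a tree $T$ (automatically bipartite and outerplanar) of maximum degree $k$ that admits no $(1,3,\ldots,3,4)$-packing coloring. Explicitly, let $T$ be the rooted tree in which the root $v$ has $k$ children $u_1,\ldots,u_k$, each $u_i$ has $k-1$ further children $w_{i,1},\ldots,w_{i,k-1}$, and each $w_{i,j}$ has $k-1$ leaf children $x_{i,j,1},\ldots,x_{i,j,k-1}$; the internal vertices $v$, the $u_i$'s, and the $w_{i,j}$'s all have degree exactly $k$, while the $x$'s are leaves.

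The first step is the following lemma about any vertex $u$ of degree $k$ in any hypothetical $(1,3,\ldots,3,4)$-packing coloring $f$. The $k$ neighbors of $u$ lie pairwise at distance $2$ through $u$, so any two of them sharing a non-$1$ color would violate the associated packing condition. A case split on $f(u)$ shows that either $a_k\in N[u]$, or $f(u)=a_i$ for some $i<k$ with $a_k\notin N(u)$; in the latter case only $k-2$ non-$1$ colors ($\{a_1,\ldots,a_{k-1}\}\setminus\{a_i\}$) remain available for the $k$ neighbors, so at least two neighbors must be colored $1$. If, moreover, two such color-$1$ neighbors $p_1,p_2$ both have degree $k$, applying the first case to each gives some $q_j\in N(p_j)$ of color $a_k$; in the tree $T$ one has $d(q_1,q_2)\le 4$ along the path through $u$, and $q_1\ne q_2$ since the unique common neighbor of $p_1$ and $p_2$ in a tree is $u$, which is colored $a_i\ne a_k$. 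This contradicts $s_k=4$.

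Applied at $v$ and at each $u_i$, whose neighbors are all degree-$k$ internal vertices, this lemma rules out the second clause and forces $a_k\in N[v]$ and $a_k\in N[u_i]$ for every $i$. Since the set $\{v\}\cup\{u_i\colon i\in[k]\}\cup\{w_{i,j}\colon i,j\}$ has pairwise distances at most $4$ (the maximum $4$ being attained between two $w$'s in distinct branches), at most one of its vertices can receive color $a_k$, and a short case analysis on which one shows that only $f(v)=a_k$ simultaneously covers $N[v]$ and every $N[u_i]$. Under this forced assignment the neighbors of $v$ use pairwise distinct non-$1$ colors avoiding $a_k$, leaving $k-1$ options for $k$ vertices, so at least one $u_\ell$ is colored $1$ and the remaining $u_i$'s exhaust $\{a_1,\ldots,a_{k-1}\}$. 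Picking any $u_i$ with $f(u_i)=a_\ell$ ($\ell<k$), its $k-1$ children $w_{i,j}$ must take pairwise distinct non-$1$ colors avoiding $a_\ell$ and $a_k$, leaving only $k-2$ non-$1$ options, so at least one $w_{i,j}$ is colored $1$; the first case of the lemma then forces some $x$-child of this $w_{i,j}$ to be colored $a_k$, but that leaf lies at distance $3$ from $v$, contradicting $s_k=4$. The main obstacle is verifying that \emph{every} admissible initial choice—which $u_\ell$ is colored $1$, which permutation of $\{a_1,\ldots,a_{k-1}\}$ the remaining $u_i$'s take, and every legal placement of $a_k$ allowed by the constraints above—leads to this contradiction; the rigid structure of $T$ makes this a short finite case check.
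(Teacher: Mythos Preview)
Your tree $T$ is too shallow: it is in fact $(1,3,\ldots,3,4)$-packing colorable, so the construction fails as a counterexample. Take $f(v)=a_k$, $f(u_i)=1$ for every $i$, $f(w_{i,j})=a_j$ for every $i,j$, and $f(x_{i,j,m})=1$ for every leaf. Colour~$1$ then occupies levels $1$ and $3$, an independent set; each class $a_j$ with $j<k$ is $\{w_{1,j},\dots,w_{k,j}\}$, pairwise at distance $4>3$; and $a_k$ appears only at $v$.

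The precise gap is the clause ``and the remaining $u_i$'s exhaust $\{a_1,\ldots,a_{k-1}\}$''. You have only shown that \emph{at least} one $u_i$ receives colour $1$; nothing forbids all of them to, and in the coloring above there is no $u_i$ with $f(u_i)=a_\ell$ to pick in your next step. Your lemma does not rescue this case at level $2$ either: each $w_{i,j}$ has colour $a_j$ with several colour-$1$ neighbours, but apart from $u_i$ these are leaves of degree $1$, so the degree-$k$ hypothesis in the second clause of the lemma is not met. Adding one more level of degree-$k$ vertices below the $x$'s repairs the argument: in the coloring above each $x$ is forced to colour $1$, and then its $k-1$ new children would need $k-1$ pairwise distinct colours from $\{a_1,\ldots,a_{k-1}\}\setminus\{a_j\}$ (size $k-2$), colour $a_k$ being blocked by $v$ at distance $4$. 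The paper works with a deeper tree and argues more directly: locate a colour-$1$ vertex $x$ near the root, note that its $k$ neighbours exhaust all non-$1$ colours (so some neighbour $y$ carries the $4$-colour), show that every neighbour of a second neighbour $z\ne y$ is forced to colour $1$, and repeat the exhaustion argument one level down to produce a second vertex of colour $4$ at distance $4$ from $y$.
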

\begin{proof}
Let $T$ be the complete $k$-ary tree of depth 5 and root $r$, and
suppose there exists an $S$-coloring $c$ of $T$, using
$S=(1,3,\ldots,3,4)$ as in the statement of the proposition.  Note
that there exists a vertex $x\in\{r\}\cup N(r)$ that is colored
by $1$. Since $x$ has $k$ neighbors, all must receive distinct
colors, different from $1$. In particular, there exists a neighbor $y$ of $x$ such that $c(y)=4$. Let $z$ be any neighbor of $x$ different from $y$. Clearly, $z$ must be colored by a color '3', while all neighbors of $z$ are colored by $1$.
Finally, the neighbors of the vertices in $N(z)$ must receive all colors different from $1$. In particular, there exists a vertex $u$ with $d(u,y)=4$, such that $c(u)=4$, which is a contradiction.

%
\qed


\section{$(1,2,2,2)$-packing coloring of subcubic outerplanar graphs}
\label{sec:1222}

In this section, we first prove that a subcubic outerplanar graph $G$ is $(1,2,2,2)$-packing colorable when $G$ is triangle-free outerplanar.

\medskip 

\noindent {\bf Proof of Theorem~\ref{th:1222}.}

Let $G$ be a  subcubic outerplanar graph and let $D$ and $\mathcal{L}_G$ be defined as in Section \ref{sec:133}.
In this proof, the vertices with color $1$ will form an independent set and the vertices with color $a_i$, for each $i\in\{1,2,3\}$ will form a $2$-packing.

By induction on the order of $\mathcal{L}_G$, we prove that there is an $S$-packing coloring of $G$. Suppose $\mathcal{L}_G$  has only one vertex $u$. If $u\in D$, then it suffices to color $u$ with color $1$. If $u\in V(\mathcal{T}_G)$, then we have to color a cycle of order $n$. We color it with colors  $1$, $a_1$, $a_2$ and $a_3$ using the pattern $1,a_1,1,a_2$ and using the color $a_3$ for the last vertex if $n\equiv1 \pmod{4}$, colors $1,a_3$ for the two last vertices if $n\equiv2 \pmod{4}$ or colors $1,a_3,a_2$ for the three last vertices if $n\equiv3 \pmod{4}$.

Now, consider a graph $G$ such that $\mathcal{L}_G$  has order
$n+1$, $n\ge1$. Let $u$ be a leaf of $\mathcal{L}_G$ and let $G'$ be the
following graph:
$$
G' = \left\{
    \begin{array}{ll}
        G-u & \mbox{if } u \in D \\
        G-B(u) & \mbox{if } u\in V(\mathcal{T}_G)\mbox{;}
    \end{array}
\right.
$$
where $B(u)$ is the set of the vertices of $V(G)\setminus D$, which
belong to $C(u)$ but no other inner face of $G$. By induction, we
can color the vertices of $G'$, since $\mathcal{L}_{G'}$ has order
$n$, and it suffices to extend the coloring of $G'$ to the uncolored
vertices of $G$. Let $v$ be the neighbor of $u$ in
$\mathcal{L}_{G'}$.
\begin{description}
\item[Case 1] $u\in D$.

Let $v'$ denote the neighbor of $u$ in $C(v)$ if $v\in \mathcal{T}_G$ and $v'=v$ if $v\in D$.
If $v'$ is not colored by $1$, then we can color $u$ by $1$ and we are done. Otherwise ($v'$ is colored by $1$), since $G$ is subcubic, $v'$ has at most two colored neighbors. Therefore, we can color $u$ by a color not given to vertices from $N_G(v')$.

\item[Case 2] $u\in \mathcal{T}_G$ and $v\in D$.

Let $u'$ be the neighbor of $v$ in $C(u)$.
First, if $v$ is colored by color $1$, then, since $\Delta(G)\le 3$, at most two neighbors of $v$ are colored in $G$ and we can easily color $u'$. We can color the remaining uncolored vertices of $C(u)$ by the color $1$ and the three remaining colors  (by proceeding as in the coloring of a cycle described in the initial step of the induction).
Second, if $v$ is not colored by color $1$, then we color $u'$ with color $1$. Since $u'$ is in $C(u)$ which is uncolored, exactly one neighbor of $u'$, namely $v$, is colored in $G$. Thus, we can give two different colors from $\{a_1,a_2,a_3\}$ to the neighbors of $u'$ in $C(u)$. We now extend the coloring to the remaining vertices of $C(u)$ using color $1$ and the three remaining colors (by proceeding as in the coloring of a cycle described in the initial step of the induction).

\item[Case 3] $u\in \mathcal{T}_G$ and $v\in \mathcal{T}_G$.

Since $G$ is subcubic, $|V(C(u))\cap V(C(v))|=0$ or $|V(C(u))\cap V(C(v))|=2$ (if $|V(C(u))\cap V(C(v))|=1$, then the common vertex would have degree at least 4).

\item[Subcase 3.a] $|V(C(u))\cap V(C(v))|=0$.

In this case, a vertex $u'\in C(u)$ is adjacent to a vertex $v'\in
C(v)$. For the proof of this case, we follow the same steps as in
Case 2, where the vertex $v'$, defined in the previous sentence,
plays the role of $v$ in Case 2.

\item[Subcase 3.b] $|V(C(u))\cap V(C(v))|=2$.

Let $\{w_1,w_2\}=V(C(u))\cap V(C(v))$.
Since $G$ is outerplanar, $w_1$ and $w_2$ are adjacent in $G$.
By the induction hypothesis, $w_1$ and $w_2$ are already colored. Let $x_1$ be the neighbor of $w_1$ (which is not $w_2$) in $C(u)$ and let $x_2$ be the neighbor of $w_2$ (which is not $w_1$) in $C(u)$. 
If $w_1$ has no neighbor with color $1$, then we recolor it with color $1$. 
If after this, $w_2$ has no neighbor with color $1$, then we recolor it with color $1$. 

Suppose that $4\le|C(u)|\le 5$. First, if one of $w_i$ is colored by $1$, say $w_1$, we can extend the coloring to $C(u)$ as follows. In this case, $x_1$ receives the color $a_i$, which is not used in the neighborhood of $w_1$, and we can color $x_2$ by $1$. In the case $|C(u)|=5$, the common neighbor of $x_1$ and $x_2$ receives the color that was given to the neighbor of $w_1$, which is not $w_2$, in $C(v)$. Second, assume without loss of generality that $w_i$ received color $a_i$ for $i\in\{1,2\}$. Note that by the above recoloring condition, $w_1$ and $w_2$ have neighbors, which are given color $1$. If $|C(u)|=4$, we can color $x_1$ and $x_2$ by colors $1$ and $a_3$, respectively.  Otherwise, if $|C(u)|=5$, we give color $1$ to vertices $x_1$ and $x_2$, and the common neighbor of $x_1$ and $x_2$ gets color $a_3$. 

So, let $|C(u)|>5$.
We color the uncolored vertices of $C(u)$ starting by coloring the vertex $x_1$. 
We color $x_1$ by using a color not given to $w_1$ and the colored neighbors of $w_1$.
If $w_2$ is not colored by $1$ we color $x_2$ by $1$. Otherwise, we color $x_2$ by a color not given to $w_2$ and the colored neighbors of $w_2$. The remaining vertices of $C(u)$ can be colored by the pattern described in the initial step of the induction, alternating $1,a_i,1,a_j$, and possibly completing the coloring of the cycle with the third color $a_k$, depending on the length of $C(u)$.
\end{description}
\end{proof}

We next present two examples, which show that Theorem~\ref{th:1222} is best possible. 
First, we prove that the result does not hold if the graph contains triangles.  

\begin{prop}
There exists a subcubic outerplanar graph, which is not $(1,2,2,2)$-packing colorable.
\end{prop}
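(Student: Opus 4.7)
The plan is to exhibit an explicit finite subcubic outerplanar graph $G$ containing at least one triangle, and then show that no $(1,2,2,2)$-packing coloring of $G$ exists by a short case analysis on how color~$1$ is distributed.

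My first step is to reformulate the problem: an $S$-packing coloring with $S=(1,2,2,2)$ is a partition $V(G)=I\cup P_1\cup P_2\cup P_3$ in which $I$ is independent and each $P_i$ is a $2$-packing. Equivalently, $G$ is $(1,2,2,2)$-packing colorable if and only if there is an independent set $I\subseteq V(G)$ such that the induced subgraph of the square $G^2$ on $V(G)\setminus I$ is properly $3$-colorable. Thus it suffices to construct $G$ for which, for \emph{every} independent set $I$, the graph $G^2[V(G)\setminus I]$ contains $K_4$, and hence cannot be $3$-colored. For this, the basic observation is that any degree-$3$ vertex $v$ of $G$ together with its three neighbors form a $K_4$ in $G^2$, and these ``local $K_4$'s'' are exactly the obstructions that must be broken by $I$.

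The construction will glue together several triangular faces (so that the triangles in $G$ force at least two distinct colors among $\{a_1,a_2,a_3\}$ per triangle) in such a way that, for any independent set $I$, some distance-$2$ clique of size four survives in $V(G)\setminus I$. I would present $G$ by a concrete drawing (with all vertices on the outer face) and list its vertices and edges, and then directly verify subcubicity, outerplanarity, and the presence of a triangle. The non-colorability argument then proceeds by enumerating, for each triangle $T$ of $G$, the four possibilities for the coloring of $T$ (either no vertex of $T$ is color $1$, or exactly one is, for each of three choices) and tracing the forced colors on the neighbouring faces through the weak dual until a pair of distinct vertices at distance at most two share some color $a_i$, which is the desired contradiction.

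The main obstacle is ensuring that $G$ is \emph{outerplanar}. The most natural candidates — for example, two $K_4-e$ diamonds joined by two bridges, or a $K_4-e$ with a cycle closing between its two degree-$2$ vertices — immediately contain a $K_4$- or $K_{2,3}$-minor and so fail outerplanarity. The construction must therefore keep the weak dual of $G$ a tree while still arranging the triangles tightly enough that no independent set can simultaneously hit every local $K_4$ in $G^2$. Once such a $G$ is produced, checking subcubicity, outerplanarity and the triangle condition is immediate, and the remaining proof is a finite (though somewhat tedious) case check that can be displayed in a compact table, analogous to the patterns used in the proof of Theorem~\ref{mainth}.
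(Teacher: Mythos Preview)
Your proposal is not a proof; it is a plan for a proof with the central ingredient missing. You correctly identify the shape of the argument --- exhibit a small subcubic outerplanar graph with some triangles and then show by a short case analysis that no partition $V(G)=I\cup P_1\cup P_2\cup P_3$ with $I$ independent and each $P_i$ a $2$-packing can exist --- but you never actually produce $G$, and you never carry out the case analysis. Saying ``I would present $G$ by a concrete drawing'' and ``the remaining proof is a finite (though somewhat tedious) case check'' is precisely the part that needs to be written down. You even acknowledge that your first attempts (two diamonds joined by bridges, etc.) fail outerplanarity, so as it stands the proposal contains no candidate at all.

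For comparison, the paper's proof is very short because the graph is chosen carefully. It takes a $5$-cycle $u\,u_1\,u_3\,u_4\,u_2$, attaches a triangle on the edge $u_1u_3$ (new vertex $x$) and a triangle on the edge $u_2u_4$ (new vertex $y$), and then takes two disjoint copies of this $7$-vertex gadget joined by a single bridge $uv$ between the degree-$2$ vertices. This is plainly outerplanar and subcubic. The bridge forces one of $u,v$ to receive some color $a_i$; the edge $u_3u_4$ forces one of them, say $u_3$, to receive some $a_j$ with $j\neq i$; and then every vertex of the triangle $\{u_2,u_4,y\}$ is at distance at most $2$ from both $u$ and $u_3$, so only the two colors $1$ and $a_k$ remain for a triangle --- contradiction. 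Note that the argument is a two-line forcing, not a table of cases. If you want to repair your write-up, the missing idea is exactly this: arrange two triangles on opposite edges of a $C_5$ so that once two of the $a$-colors are pinned down near the apex, an entire triangle is squeezed into the remaining $\{1,a_k\}$.
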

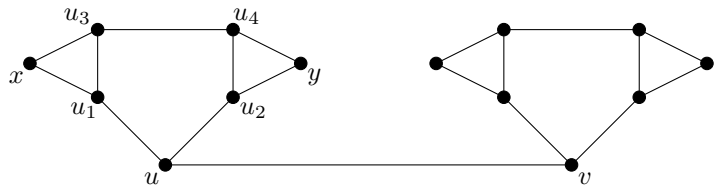
\begin{figure}[ht]
\begin{center}
\begin{tikzpicture}[scale=1.8]
\draw (0,0) -- (-0.5,0.5);
\draw (0,0) -- (0.5,0.5);
\draw (-0.5,0.5) -- (-0.5,1);
\draw (0.5,0.5) -- (0.5,1);
\draw (-0.5,1) -- (0.5,1);
\draw (-0.5,0.5) -- (-1,0.75);
\draw (-0.5,1) -- (-1,0.75);
\draw (0.5,0.5) -- (1,0.75);
\draw (0.5,1) -- (1,0.75);

\node at (0,0) [circle,draw=black,fill=black,scale=0.5]{};
\node at (-0.5,0.5) [circle,draw=black,fill=black,scale=0.5]{};
\node at (0.5,0.5) [circle,draw=black,fill=black,scale=0.5]{};
\node at (-0.5,1) [circle,draw=black,fill=black,scale=0.5]{};
\node at (0.5,1) [circle,draw=black,fill=black,scale=0.5]{};
\node at (-1,0.75) [circle,draw=black,fill=black,scale=0.5]{};
\node at (1,0.75) [circle,draw=black,fill=black,scale=0.5]{};

\draw (0+3,0) -- (-0.5+3,0.5);
\draw (0+3,0) -- (0.5+3,0.5);
\draw (-0.5+3,0.5) -- (-0.5+3,1);
\draw (0.5+3,0.5) -- (0.5+3,1);
\draw (-0.5+3,1) -- (0.5+3,1);
\draw (-0.5+3,0.5) -- (-1+3,0.75);
\draw (-0.5+3,1) -- (-1+3,0.75);
\draw (0.5+3,0.5) -- (1+3,0.75);
\draw (0.5+3,1) -- (1+3,0.75);
\draw (0,0) -- (3,0);

\node at (0+3,0) [circle,draw=black,fill=black,scale=0.5]{};
\node at (-0.5+3,0.5) [circle,draw=black,fill=black,scale=0.5]{};
\node at (0.5+3,0.5) [circle,draw=black,fill=black,scale=0.5]{};
\node at (-0.5+3,1) [circle,draw=black,fill=black,scale=0.5]{};
\node at (0.5+3,1) [circle,draw=black,fill=black,scale=0.5]{};
\node at (-1+3,0.75) [circle,draw=black,fill=black,scale=0.5]{};
\node at (1+3,0.75) [circle,draw=black,fill=black,scale=0.5]{};

\node at (-0.1,-0.1) {$u$};
\node at (-0.6,0.4) {$u_1$};
\node at (-1.1,0.65) {$x$};
\node at (0.65,0.4) {$u_2$};
\node at (1.1,0.65) {$y$};
\node at (-0.65,1.1) {$u_3$};
\node at (0.6,1.1) {$u_4$};
\node at (3.1,-0.1) {$v$};

\end{tikzpicture}
\end{center}
\caption{\label{graphnot1222} A non $(1,2,2,2)$-packing colorable graph.}
\end{figure}
\begin{proof}
Let $G$ be the graph depicted in Figure~\ref{graphnot1222}.  
We suppose that a $(1,2,2,2)$-packing coloring uses the colors $1,a_1 ,a_2$ and $a_3$, the meaning of which should be clear. 
Suppose, on the contrary, that $G$ has a $(1,2,2,2)$-packing coloring. Clearly, one vertex among $u$ and $v$ should be colored by a color different from $1$. 
Suppose, without loss of generality that $u$ has a color in $\{a_1 ,a_2,a_3\}$, say $a_1$. 
Note that at least one of the vertices $u_3$ and $u_4$ is not colored by color $1$, and assume without loss of generality that $u_3$ is colored with $a_2$. Consequently, we can only use colors $1$ and $a_3$ for the three vertices $u_2,u_4,y$ (forming a triangle), which is not possible. 
\end{proof}

Second, we prove that Theorem~\ref{th:1222} cannot be improved by replacing in $(1,2,2,2)$ an integer $2$ by an integer $3$, when $G$ is in the class of subcubic triangle-free outerplanar graphs.

\begin{prop}
There exists a subcubic triangle-free outerplanar graph, which is not $(1,2,2,3)$-packing colorable.
\label{prp:ex3}
\end{prop}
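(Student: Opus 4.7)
The plan is to exhibit an explicit finite subcubic triangle-free outerplanar graph $G$ and to check, via a case analysis on local colors, that $G$ admits no $(1,2,2,3)$-packing coloring. Writing the available colors as $1, a_1, a_2, b$, vertices sharing the same color must be at distance greater than $1$, $2$, $2$, $3$ respectively.

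The key local observation I would use to drive the case analysis is that at every cubic vertex $v$ of a triangle-free graph, the three neighbors $x, y, z$ of $v$ are pairwise at distance exactly $2$, so each of the color classes $a_1$, $a_2$ and $b$ contains at most one of $x, y, z$. Consequently either $c(v) = 1$ and then $\{c(x), c(y), c(z)\} = \{a_1, a_2, b\}$ is fully determined up to permutation, or $c(v) \ne 1$ and at least one of $x, y, z$ is forced to carry color $1$. This dichotomy, combined with the distance bounds $d(x,y) = d(x,z) = d(y,z) = 2$, will let me propagate forced colors along a chain of cubic vertices.

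For the graph itself I would chain together two or three copies of a small gadget, for instance the eight-vertex graph obtained from a $4$-cycle by attaching a pendant edge at each of its four vertices, joined by a bridge or a shared edge so that $G$ remains outerplanar, triangle-free and subcubic. Fixing a cubic vertex $v_0$ in the middle gadget, in each of the four cases for $c(v_0)$ the dichotomy above and the distance constraints pin down most of the coloring of the surrounding gadget; in every branch one then finds either a pendant vertex left with no admissible color, or two uses of some $a_i$ at distance $\le 2$, or two uses of $b$ at distance $\le 3$. The main obstacle will be that color $b$ has the loosest distance constraint and so is the hardest to clash: the gadget must be shaped so that two ``$b$-eligible'' cubic centers sit at distance exactly $3$, which in turn dictates the overall diameter of $G$. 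Once the gadget is correctly tuned, the verification reduces to a bounded finite case check, requiring no new structural theory beyond the cubic-vertex observation above.
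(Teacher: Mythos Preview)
Your proposal is not yet a proof: you have sketched a strategy (case-split at a cubic centre, propagate constraints, look for a clash) but you have neither fixed the graph nor carried out the case analysis. Phrases like ``I would chain together two or three copies'', ``once the gadget is correctly tuned'', and ``the verification reduces to a bounded finite case check'' are promissory notes, not arguments. In particular, your candidate gadget---a $4$-cycle with a pendant at each vertex---\emph{is} $(1,2,2,3)$-packing colorable (for instance $v_1,v_2,v_3,v_4=1,a_1,1,a_2$ and $p_1,p_2,p_3,p_4=b,1,b,1$), and because the $4$-cycle itself can be colored using only $1,a_1,a_2$, there is no mechanism in your gadget that \emph{forces} the colour $b$ to appear at a specific location. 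Without such a forcing device it is not clear that chaining copies will ever produce an unavoidable $b$--$b$ conflict at distance $\le 3$, and your outline gives no indication of how the tuning would go.

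The paper's proof sidesteps this difficulty with a single structural observation: in any $(1,2,2,3)$-packing coloring of $C_5$, all four colours must appear (two vertices get $1$, and since every pair of vertices in $C_5$ is at distance at most $2$, the remaining three vertices must receive $a_1,a_2,b$ one each). The example is then built from six copies of $C_5$: one central copy $C$, with a fresh $C_5$ attached by a bridge to each vertex of $C$. Some vertex $x\in C$ must carry colour $b$; but $x$ is within distance $3$ of every vertex of the pendant $C_5$ hanging from $x$, so colour $b$ is unavailable there---contradicting the fact that the pendant $C_5$ needs it. The whole argument is three lines, with no branching. The ingredient you are missing is precisely this: a small subgraph (an odd cycle of diameter $2$) that \emph{must} use the long-range colour, so that the contradiction is global and automatic rather than the outcome of a hand-tuned local case chase.
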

\begin{proof}
Let $G_{25}$ be the graph obtained from six copies of the 5-cycle, one of which we distinguish and denote by $C$; and to each vertex $x$ of $C$ we add an edge between $x$ and a vertex of its own copy of $C_5$. Note that in a $(1,2,2,3)$-packing coloring of $C_5$ one must color two vertices by color $1$, and each of the other three vertices receives its own color among $\{2,2',3\}$.  Each vertex $x$ of the central cycle $C$ is at distance at most $3$ from all vertices in the 5-cycle, which is attached to $x$. Thus, the assumption that $G_{25}$ is $(1,2,2,3)$-packing colorable implies that a vertex $x$ in $C$ is colored by color $3$. However, distances from $x$ to vertices of the 5-cycle attached to $x$ prevent the use of color $3$ in that copy of $C_5$, which is in contradiction to the existence of a $(1,2,2,3)$-packing coloring of $G_{25}$. 
\end{proof}


\section{Concluding remarks}

Theorem~\ref{mainth} gives a partial (affirmative) answer to the question posed in several papers concerning the boundedness of the packing chromatic number in the class of planar subcubic graphs. Instead of repeating the question, we propose two problems that lie between Theorem~\ref{mainth} and this question. In one of them, we consider a non-bipartite extension of the theorem, and in the other we replace outerplanar graphs with planar graphs.

\begin{ques} 
Is the packing chromatic number bounded in the class of 2-connected subcubic outerplanar graphs?
\end{ques}

\begin{ques} 
Is the packing chromatic number bounded in the class of 2-connected bipartite subcubic planar graphs?
\end{ques}

While we do not dare to suggest an answer to the above questions, we strongly believe that Theorem~\ref{mainth} could be extended from the $2$-connected case to all bipartite subcubic outerplanar graphs. 

In Section~\ref{sec:1222}, we proved that a subcubic triangle-free outerplanar graph is $(1,2,2,2)$-packing colorable, and it is not $(1,2,2,3)$-packing colorable in general. A similar proof as in Theorem~\ref{th:1222} can be used to prove the following result.
\begin{theo}
\label{th:last}
If $G$ is a subcubic triangle-free outerplanar graph, then $G$ is $(1,1,2)$-packing colorable.
\end{theo}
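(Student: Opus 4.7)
The plan is to mirror the inductive strategy from the proof of Theorem~\ref{th:1222}: I would induct on $|V(\mathcal{L}_G)|$, with $\mathcal{L}_G$ as in Section~\ref{sec:133}, and at each step peel off a leaf and extend the previously constructed colouring to the newly exposed vertices. In the target $(1,1,2)$-packing colouring, the classes of colours $1$ and $2$ are to form independent sets, while the class of colour $3$ is to form a $2$-packing; crucially, two of the three colour classes are ordinary proper-colouring classes, which gives much more flexibility than in Theorem~\ref{th:1222}.

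For the base case, when $|V(\mathcal{L}_G)|=1$: if $u\in D$ I would colour $u$ with $1$; if $u\in V(\mathcal{T}_G)$, then $G$ is the cycle $C(u)$ of length $n\ge 4$ (using triangle-freeness). For even $n$ I alternate the pattern $1,2$. For odd $n\ge 5$ I assign colour $3$ to one vertex and alternate $1,2$ along the remaining even-length path; with a single occurrence of colour $3$ the $2$-packing condition is automatic.

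The inductive step reuses the same case split as Theorem~\ref{th:1222} (pendant vertex, or a new face attached by an edge, sharing one vertex, or sharing two vertices with its parent). In every case, the uncoloured vertices form either a single vertex or an arc of the new cycle $C(u)$ whose endpoints are already constrained. Since $\{1,2\}$ alone properly colour paths, the only obstacle to extending by the alternating pattern $1,2,1,2,\ldots$ is a parity mismatch at the endpoints; whenever such a mismatch arises I would insert a single vertex of colour $3$ along the arc to absorb the parity. As in Subcase 3.b of Theorem~\ref{th:1222}, I allow the recolouring of a shared vertex of the parent face (from $2$ to $1$, say) to reconcile the constraints at the two endpoints of the arc when needed.

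The main obstacle will be the $2$-packing requirement for colour $3$ \emph{across} different faces: any newly introduced colour-$3$ vertex must lie at distance greater than $2$ from every previously placed colour-$3$ vertex. Because each face contributes at most one colour-$3$ vertex and odd faces have length at least $5$, the position of colour $3$ within the new arc is flexible; the plan is to always place it on a vertex of $C(u)$ farthest from the vertices shared with the parent face. The subcubic and triangle-free hypotheses then rule out the short alternative paths between faces that would be needed for two such vertices to lie at distance at most $2$, since any path between two distinct faces must traverse the shared boundary and cannot create a triangle shortcut. Carefully verifying this distance condition in each case of the inductive extension, and checking that occasional recolourings do not propagate back to violate conditions in previously handled faces, is the technical heart of the proof.
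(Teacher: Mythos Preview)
Your proposal is correct and follows exactly the approach the paper indicates: the paper does not give a detailed proof of Theorem~\ref{th:last} but merely states that ``a similar proof as in Theorem~\ref{th:1222} can be used,'' and your sketch is precisely such a proof, carried further than the paper itself does. Your identification of the only real difficulty---ensuring that the single colour-$3$ vertex you may need in a new odd face lies at distance more than $2$ from any previously placed colour-$3$ vertex---is accurate, and your remedy (placing it as far as possible from the attachment vertices, exploiting that triangle-freeness forces odd faces to have length at least $5$, and that subcubicity forces all paths into $G'$ to pass through the one or two shared boundary vertices) is sound.
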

The above result can be viewed as an extension of the $3$-colorability of outerplanar graphs. We cannot omit the triangle-free condition from Theorem~\ref{th:last}, as demonstrated by the following example. Take four copies of the triangle $C_3$, one of which we distinguish and denote it by $C$; and to each vertex $x$ of $C$ we add an edge between $x$ and a vertex of its own copy of $C_3$. It is easy to see that the resulting graph $G$ is not $(1,1,2)$-packing colorable (clearly, $G$ is outerplanar and subcubic). In addition, Theorem~\ref{th:last} cannot be improved in such a way that the integer $2$ be replaced by $3$ in $(1,1,2)$-packing colorability of subcubic triangle-free outerplanar graphs. To see this, take the graph $G_{25}$ from the proof of Proposition~\ref{prp:ex3} as an example.

\section*{Acknowledgments}
We are grateful to an anonymous referee for a careful reading of the initial version of the paper and for a number of suggestions that helped to improve the presentation.

This work was performed with the financial support of the bilateral project "Distance-constrained and game colorings of graph products" (BI-FR/18-19-Proteus-011).

B.B. acknowledges the financial support from the Slovenian Research Agency (research core funding No.\ P1-0297 and project Contemporary invariants in graphs No.\ J1-9109).

\end{document}